\documentclass[12pt,a4paper,reqno]{amsart}
\usepackage[utf8]{inputenc}
\usepackage[english]{babel}
\usepackage{amsmath}
\usepackage{amsfonts}
\usepackage{amssymb}
\usepackage{color}

\usepackage{amsthm}

\newtheorem{lemma}{Lemma} [section]

\newtheorem{theorem}{Theorem} [section]
\newtheorem{corollary}{Corollary}[section]

\usepackage{graphics}
\usepackage{epsfig}

\usepackage{url}
\usepackage{hyperref}

\setlength{\parindent}{0pt}
\setlength{\parskip}{4pt}

\usepackage[a4paper,top=2.5cm,bottom=2.8cm,
left=3.2cm,right=3.2cm]{geometry}
\pagestyle{myheadings}
\markleft{\hfill \textsc{A Triad of Circles Associated with a Triangle} \hfill}
\markright{\hfill \textsc{Ercole Suppa and Stanley Rabinowitz} \hfill}
\setcounter{page}{54}

\begin{document}

Sangaku Journal of Mathematics (SJM) \copyright SJM \\
ISSN 2534-9562 \\
Volume 7 (2023), pp. 54--70 \\
Received 18 July 2023 Published on-line 3 August 2023 \\ 
web: \url{http://www.sangaku-journal.eu/} \\
\copyright The Author(s) This article is published 
with open access.\footnote{This article is distributed under the terms of the Creative Commons Attribution License which permits any use, distribution, and reproduction in any medium, provided the original author(s) and the source are credited.} \\
\bigskip
\bigskip

\begin{center}
	{\Large \textbf{A Triad of Circles Associated with a Triangle}} \\
	\bigskip

	\textsc{Ercole Suppa$^a$ and Stanley Rabinowitz$^b$} \\

	$^a$ Via B. Croce 54, 64100 Teramo, Italia \\
	e-mail: \href{mailto:ercolesuppa@gmail.com}{ercolesuppa@gmail.com} \\
	web: \url{http://www.esuppa.it/} \\

	$^b$ 545 Elm St Unit 1,  Milford, New Hampshire 03055, USA \\
	e-mail: \href{mailto:stan.rabinowitz@comcast.net}{stan.rabinowitz@comcast.net}\footnote{Corresponding author} \\
	web: \url{http://www.StanleyRabinowitz.com/} \\
		
\end{center}
\bigskip

\textbf{Abstract.} We study some properties of a triad of circles associated with a triangle.
Each circle is inside the triangle, tangent to two sides of the triangle, and
externally tangent to the circle on the third side as diameter.
In particular, we find a nice relation involving the radii of the inner and outer Apollonius circles of the three circles in the triad.

\medskip
\textbf{Keywords.} Paasche point, Apollonius circle, barycentric coordinates, Mathematica.

\medskip
\textbf{Mathematics Subject Classification (2020).} 51-02, 51M04.

\long\edef\void#1{}



\section{Introduction}

\bigskip
\textbf{Notation.} Throughout this paper, we will use the following notation, where $\triangle ABC$ is a fixed
acute triangle in the plane. We let $a=BC$, $b=CA$, $c=AB$, $r$ is the inradius of $\triangle ABC$, $R$ is the circumradius of $\triangle ABC$, $p=\frac{a+b+c}{2}$, $\Delta=[ABC]$ is the area of the triangle,
and $S=2\Delta$. We also let $I$ denote the incenter of $\triangle ABC$.

The semicircle erected inwardly on side $BC$ will be named $\omega_a$ as shown in Figure~\ref{fig:circle} (left).
Semicircles $\omega_b$ and $\omega_c$ are defined similarly.
The circle inside $\triangle ABC$, tangent to sides $AB$ and $AC$, and externally tangent to
semicircle $\omega_a$ will be named $\gamma_a$.
Circles $\gamma_b$ and $\gamma_c$ are defined similarly.
The radii of circles $\gamma_a$, $\gamma_b$, and $\gamma_c$ are denoted by $\rho_a$, $\rho_b$, and $\rho_c$, respectively.
The centers of these circles are named $D$, $E$, and $F$, respectively, as shown in Figure~\ref{fig:circle} (right).

\begin{figure}[ht]
\centering
\includegraphics[scale=1]{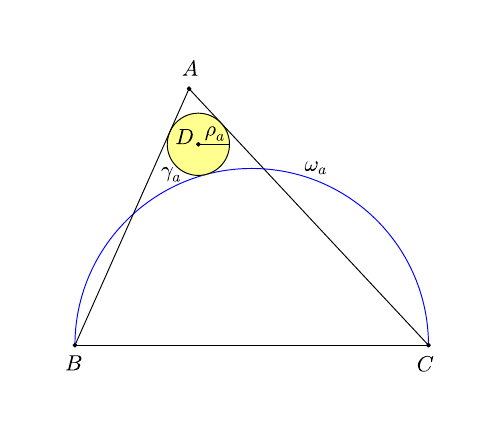}
\qquad
\includegraphics[scale=1]{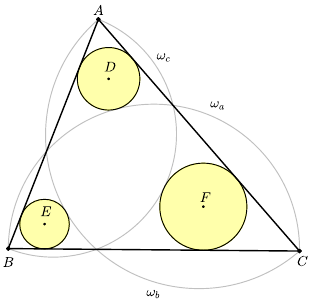}
\caption{}
\label{fig:circle}
\end{figure}

For purposes of this paper, these three circles will be called the \emph{triad of circles} associated with $\triangle ABC$.

This triad of circles appears in a Sangaku described in \cite{Honma} and reprinted in \cite[problem~6]{Okumura}.
The statement in the Sangaku is given as Theorem~\ref{thm:Sangaku}.

\begin{theorem}\label{thm:Sangaku}
For the triad of circles associated with $\triangle ABC$, we have
$$r=\frac12\left(\rho_a+\rho_b+\rho_c+\sqrt{\rho_a^2+\rho_b^2+\rho_c^2}\right).$$
\end{theorem}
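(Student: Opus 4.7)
The plan is to obtain a closed-form expression for $\rho_a$, and then reduce the identity to a classical half-angle identity in a triangle.

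Since $\gamma_a$ is tangent to $AB$ and $AC$, its center $D$ lies on the internal bisector from $A$ at distance $AD=\rho_a/\sin(A/2)$. The external tangency to $\omega_a$ reads $|DM_a|=\rho_a+a/2$, where $M_a$ is the midpoint of $BC$. Setting up coordinates along and perpendicular to the bisector, and using that the projection of $\overrightarrow{AM_a}$ on the bisector equals $(b+c)\cos(A/2)/2$ while $|AM_a|^2-(a/2)^2=bc\cos A$, the tangency condition reduces to the quadratic
\begin{equation*}
\cot^2(A/2)\,\rho_a^2-\bigl[(b+c)\cot(A/2)+a\bigr]\rho_a+bc\cos A=0.
\end{equation*}

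The decisive step is to observe that this quadratic admits the surprisingly clean solution
\begin{equation*}
\rho_a=r\bigl(1-\tan(A/2)\bigr),
\end{equation*}
the other root corresponding to a circle that lies outside $\triangle ABC$. I would verify this by direct substitution, using the identities $\cot(A/2)=(p-a)/r$, $(p-a)(p-b)(p-c)=pr^2$, and $bc(1+\cos A)=2p(p-a)$ to collapse the expression. This is the main obstacle: the naive quadratic formula produces an ugly radical which gives no hint of simplification, so one really has to guess the formula (for instance from the equilateral case, where $\rho/r=1-\tan(\pi/6)$) and then check it.

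With $\rho_a=r(1-\tan(A/2))$ and its cyclic analogues in hand, set $t_a=\tan(A/2)$, $t_b=\tan(B/2)$, $t_c=\tan(C/2)$. Dividing the target identity by $r$ and isolating the radical, the theorem is equivalent to
\begin{equation*}
\sqrt{(1-t_a)^2+(1-t_b)^2+(1-t_c)^2}\;=\;(t_a+t_b+t_c)-1.
\end{equation*}
Squaring both sides and cancelling the common quadratic terms reduces this to the classical identity $t_at_b+t_bt_c+t_ct_a=1$, which holds whenever $A+B+C=\pi$. The sign of the square root is also correct: since $\tan$ is convex on $(0,\pi/2)$, Jensen gives $t_a+t_b+t_c\ge 3\tan(\pi/6)=\sqrt{3}>1$, so both sides above are non-negative.
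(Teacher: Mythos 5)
Your argument is correct, and it is worth noting that the paper itself does not prove Theorem~\ref{thm:Sangaku} at all: it cites \cite{Sup} for the proof, and the only ingredient it records is Theorem~\ref{thm:rho}, the formula $\rho_a=r\left(1-\tan\frac A2\right)$, also quoted from \cite{Sup} without proof. Your proposal therefore supplies a self-contained argument where the paper has none. Your first half re-derives exactly Theorem~\ref{thm:rho}: the tangency condition $|DM_a|=\rho_a+\frac a2$ with $AD=\rho_a/\sin\frac A2$ does give the quadratic you state, and the substitution check does close with the identities you list (using $\cot\frac A2=(p-a)/r$, $r^2p=(p-a)(p-b)(p-c)$ and $bc(1+\cos A)=2p(p-a)$ the left side collapses to $p(p-a)+(p-b)(p-c)-bc=0$), so the ``guess and verify'' step is sound even though you only sketch it. Your second half, reducing the Sangaku relation to $\tan\frac A2\tan\frac B2+\tan\frac B2\tan\frac C2+\tan\frac C2\tan\frac A2=1$ after checking the sign via Jensen, is clean and correct; it is close in spirit to how the paper's later sections exploit Theorem~\ref{thm:rho} for the other metric identities. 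The one place you should tighten is the dismissal of the second root of the quadratic: ``the other root corresponds to a circle outside the triangle'' is asserted, not shown. A quick fix: the circle $\gamma_a$ lies inside the triangle and is tangent to $AB$ and $AC$, so $\rho_a\le r$, while the sum of the roots equals $(b+c)\tan\frac A2+a\tan^2\frac A2$, from which the second root is $\tan\frac A2\left(p+r+a\tan\frac A2\right)>r$; hence the geometric radius must be the root $r\left(1-\tan\frac A2\right)$.
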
 

A proof of this result can be found in \cite{Sup}.
A variant on this result when the triangle is not acute can also be found in \cite{Sup}.

It is the purpose of this paper to give other properties of such a triad of circles.

\section{Known Results}

Before giving new results, we summarize some of the properties already known about the triad of circles.
The following five theorems come from \cite{Sup}.

\begin{theorem}\label{thm:rho}
For the triad of circles associated with $\triangle ABC$, we have
$$\rho_a=r\left(1-\tan\frac{A}{2}\right).$$
Similar formulas hold for $\rho_b$ and $\rho_c$.
\end{theorem}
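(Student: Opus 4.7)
The plan is as follows. Since $\gamma_a$ is tangent to both $AB$ and $AC$, its center $D$ lies on the internal bisector of $\angle BAC$, with $AD=\rho_a/\sin(A/2)$. The remaining tangency condition is external tangency between $\gamma_a$ and $\omega_a$, which forces $DM_a=\rho_a+a/2$, where $M_a$ is the midpoint of $BC$.

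To turn the latter into an algebraic equation for $\rho_a$, I apply the law of cosines in triangles $ABD$ and $ACD$ (where $\angle BAD=\angle CAD=A/2$) to obtain explicit expressions for $DB^2$ and $DC^2$ in terms of $\rho_a$, $b$, $c$, and $\cot(A/2)$. The median length formula $4DM_a^2=2(DB^2+DC^2)-a^2$ in $\triangle DBC$ then gives $DM_a^2$ in closed form; equating it to $(\rho_a+a/2)^2$ yields a quadratic equation in $\rho_a$ whose coefficients involve $a$, $b$, $c$, and $\cot(A/2)$.

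Finally, I verify that $\rho_a=r(1-\tan(A/2))$ is the relevant root. Writing $t=\tan(A/2)$ and invoking the standard half-angle identities $r=(p-a)t$, $bc\cos^2(A/2)=p(p-a)$, and $bc\sin^2(A/2)=(p-b)(p-c)$ (equivalently $bc\cos A=p(p-a)(1-t^2)$), together with $b+c=2p-a$, the quadratic collapses upon the substitution $\rho_a=r(1-t)$. The other root of the quadratic turns out to be $p\tan(A/2)(1+\tan(A/2))$, which corresponds to a larger circle on the opposite side of $\omega_a$; the acuteness hypothesis guarantees $r(1-\tan(A/2))>0$, singling out the inscribed root as $\rho_a$.

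The main obstacle is the bookkeeping in the algebraic simplification. Choosing $t=\tan(A/2)$ as the working variable and repackaging the coefficients via the half-angle identities above is what reduces the quadratic to a product of simple factors and confirms the desired value; the analogous formulas for $\rho_b$ and $\rho_c$ then follow by symmetry of the construction in the three vertices.
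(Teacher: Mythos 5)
Your proposal is correct, but note that the paper does not actually prove Theorem~\ref{thm:rho}: it imports it (together with Theorems~\ref{thm:inradius}--\ref{thm:Suppa}) from the reference \cite{Sup}, so there is no in-paper argument to compare against. Your derivation stands on its own. The two tangency-to-sides conditions place $D$ on the bisector with $AD=\rho_a/\sin\frac A2$, the external tangency gives $DM_a=\rho_a+\frac a2$, and the law of cosines plus the median formula in $\triangle DBC$ yields, after multiplying through by $\tan^2\frac A2$ and writing $t=\tan\frac A2$,
\begin{equation*}
\rho_a^2-\bigl((b+c)t+at^2\bigr)\rho_a+bc\,t^2\cos A=0 .
\end{equation*}
One checks by Vieta (using $r=(p-a)t$ and $bc\,t/(1+t^2)=\tfrac12 bc\sin A=\Delta=rp$, $\cos A=\frac{1-t^2}{1+t^2}$) that the two roots are exactly $r(1-t)$ and $pt(1+t)$, as you claim, so the factorization you assert does hold. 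The only place to tighten is the selection of the root: for an acute angle $A$ both roots are positive, so positivity of $r(1-t)$ alone does not discriminate between them; the clean argument is that $\gamma_a$ lies inside the triangle and is tangent to $AB$ and $AC$, hence $\rho_a\le r$, whereas $pt(1+t)>pt>(p-a)t=r$ (equivalently, as you indicate, the larger root is the circle tangent to the lines $AB$, $AC$ and to the full circle on $BC$ from the far side of $BC$, outside the triangle). With that one sentence added, the proof is complete, and the formulas for $\rho_b$, $\rho_c$ follow by symmetry as you say.
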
 

\begin{theorem}\label{thm:inradius}
Let $P$ and $Q$ be the feet of the perpendiculars from $D$ and $I$ to side $AC$, respectively.
Then $PQ=IQ$. (Figure~\ref{fig:inradius})
\end{theorem}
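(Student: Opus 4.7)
The plan is to exploit the fact that both $D$ and $I$ lie on the internal bisector of angle $A$, and then reduce everything to a one-line computation using Theorem~\ref{thm:rho}.

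First I would observe that the center $D$ of $\gamma_a$ lies on the bisector of $\angle BAC$, because $\gamma_a$ is tangent to both $AB$ and $AC$; the incenter $I$ of course lies on this same bisector. Since $P$ and $Q$ are the feet of perpendiculars from $D$ and $I$ to $AC$, the right triangles $APD$ and $AQI$ share the angle $A/2$ at $A$, with opposite legs $DP = \rho_a$ and $IQ = r$. Hence
\begin{equation*}
AP = \rho_a \cot\frac{A}{2}, \qquad AQ = r \cot\frac{A}{2}.
\end{equation*}

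Since $\rho_a < r$ (Theorem~\ref{thm:rho} gives $\rho_a = r(1 - \tan(A/2))$, and $0 < \tan(A/2) < 1$ when $A$ is acute), the point $P$ lies between $A$ and $Q$ on line $AC$, so
\begin{equation*}
PQ = AQ - AP = (r - \rho_a)\cot\frac{A}{2}.
\end{equation*}
Substituting $\rho_a = r\bigl(1 - \tan\tfrac{A}{2}\bigr)$ from Theorem~\ref{thm:rho} gives $r - \rho_a = r\tan\tfrac{A}{2}$, so
\begin{equation*}
PQ = r\tan\frac{A}{2}\cot\frac{A}{2} = r = IQ,
\end{equation*}
as required.

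There is no real obstacle here; the only thing to be careful about is the ordering of $A$, $P$, $Q$ on the line $AC$, which is what lets us write $PQ = AQ - AP$ rather than $|AQ - AP|$. The whole proof is essentially a direct corollary of Theorem~\ref{thm:rho} combined with the collinearity of $A$, $D$, $I$.
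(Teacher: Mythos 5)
Your proof is correct. Since $\gamma_a$ is tangent to both $AB$ and $AC$, its center $D$ does lie on the bisector of angle $A$ together with $I$, and the computation $AP=\rho_a\cot\frac A2$, $AQ=r\cot\frac A2$, hence $PQ=(r-\rho_a)\cot\frac A2=r\tan\frac A2\cot\frac A2=r=IQ$, is sound; the ordering remark is fine (and in fact not even essential, since $r-\rho_a=r\tan\frac A2>0$ makes $|AQ-AP|=r$ in any case). Note that the paper itself gives no proof of this statement: it is one of the five results quoted from the reference \cite{Sup}, so there is nothing to compare against directly. One small caveat worth being aware of: your argument shows that Theorem~\ref{thm:inradius} is essentially equivalent to the formula $\rho_a=r\bigl(1-\tan\frac A2\bigr)$ of Theorem~\ref{thm:rho}, so its independence rests entirely on how Theorem~\ref{thm:rho} is established in the cited source; within the logical structure of this paper, where Theorem~\ref{thm:rho} is stated first as a known result, your derivation is perfectly legitimate.
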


\begin{figure}[ht]
\centering
\includegraphics[scale=0.5]{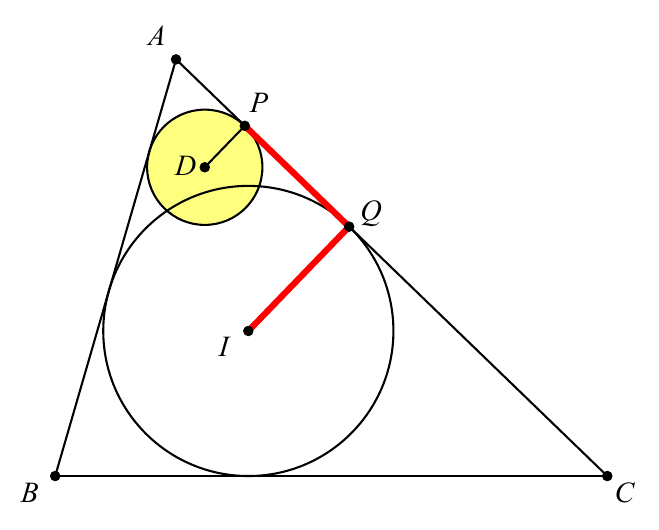}
\caption{red lengths are equal}
\label{fig:inradius}
\end{figure}

\begin{theorem}\label{thm:tangents}
The lengths of the common external tangents between any two circles
of the triad are equal. The common length is $2r$. (Figure~\ref{fig:tangents})
\end{theorem}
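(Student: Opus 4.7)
The plan is to express the external tangent length via the standard formula $\sqrt{d^2-(r_1-r_2)^2}$ (where $d$ is the center-to-center distance) and then reinterpret this quantity as a concrete segment along one side of the triangle.

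First I would drop perpendiculars from $D$ and $E$ to side $AB$, obtaining feet $D'$ and $E'$. Since $\gamma_a$ and $\gamma_b$ are tangent to $AB$ we have $DD'=\rho_a$ and $EE'=\rho_b$, and because both centers lie on the interior side of $AB$ the Pythagorean decomposition $DE^2 = D'E'^2 + (\rho_a-\rho_b)^2$ holds. Plugging this into the external tangent formula gives
\[
\sqrt{DE^2-(\rho_a-\rho_b)^2}\;=\;D'E'.
\]
So the task reduces to showing $D'E'=2r$; by the symmetry of the construction the analogous identities on sides $BC$ and $CA$ will handle the other two pairs of circles automatically.

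For the second step I would use the fact that $D$ and $E$ lie on the internal bisectors from $A$ and $B$. Elementary right-triangle trigonometry in $\triangle ADD'$ gives $AD'=\rho_a\cot(A/2)$, and likewise $BE'=\rho_b\cot(B/2)$. Substituting $\rho_a=r\bigl(1-\tan(A/2)\bigr)$ from Theorem~\ref{thm:rho} produces $AD' = r\cot(A/2)-r$, and invoking the classical incircle tangent-length identity $r\cot(A/2)=p-a$ turns this into $AD'=(p-a)-r$. The analogous formula holds for $BE'$, so
\[
AD'+BE' \;=\; (p-a)+(p-b)-2r \;=\; c-2r,
\]
and therefore $D'E' = c-(c-2r)=2r$, as required.

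The only conceptual obstacle I anticipate is spotting the first-step reduction: once one recognizes that $\sqrt{DE^2-(\rho_a-\rho_b)^2}$ is precisely the projected gap $D'E'$ along $AB$, the rest of the argument is a single application of Theorem~\ref{thm:rho} combined with the standard tangent-length formula for the incircle. A direct attack via the law of cosines on $\triangle DIE$ (using $ID=r/\cos(A/2)$ and $\angle DIE=\pi/2+C/2$) also works but is considerably messier, so the projection shortcut is what I would lean on.
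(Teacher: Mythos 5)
Your proof is correct. Note that the paper itself gives no proof of Theorem~\ref{thm:tangents}; it is quoted from \cite{Sup}, so there is no in-text argument to compare against, but your derivation is sound and self-contained once Theorem~\ref{thm:rho} is granted. Two remarks. First, your reduction step can be shortened: since $\gamma_a$ and $\gamma_b$ are both tangent to line $AB$ on the same side, the segment $D'E'$ of $AB$ between their touch points \emph{is} a common external tangent segment, so its length is the quantity sought without invoking $\sqrt{d^2-(r_1-r_2)^2}$ at all (your Pythagorean computation simply re-verifies this). Second, your identity $AD'=r\cot\frac{A}{2}-r=(p-a)-r$ is precisely the content of Theorem~\ref{thm:inradius}: the touch point of $\gamma_a$ on $AB$ lies at distance $r$ from the incircle touch point, and likewise the touch point of $\gamma_b$ lies at distance $(p-b)-r$ from $B$, i.e.\ at distance $r$ on the other side of that same incircle touch point, which gives $D'E'=2r$ immediately; this is exactly the mechanism the paper exploits in its geometric proof of Theorem~\ref{thm2a}. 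The only point worth a sentence in a polished write-up is the ordering of the feet along $AB$: measured from $A$, $D'$ sits at $(p-a)-r$ and $E'$ at $(p-a)+r$, and both quantities lie in $[0,c]$ because the triangle is acute (equivalently $\rho_a,\rho_b>0$, so $r<p-a$ and $r<p-b$), so $D'E'=c-AD'-BE'$ carries no sign issue.
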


\begin{figure}[ht]
\centering
\includegraphics[scale=0.5]{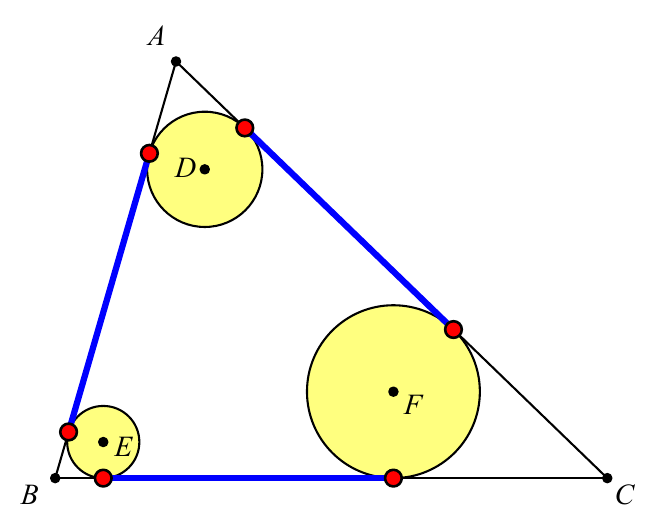}
\caption{blue lengths are equal}
\label{fig:tangents}
\end{figure}

\begin{theorem}\label{thm:concyclic}
The six points of contact of the triad of circles associated with $\triangle ABC$
lie on a circle with center $I$ and radius $r\sqrt2$. (Figure~\ref{fig:concyclic})
\end{theorem}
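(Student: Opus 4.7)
The plan is to derive Theorem~\ref{thm:concyclic} as a short Pythagorean corollary of Theorem~\ref{thm:inradius}. Fix one of the six contact points, say the point $P$ where $\gamma_a$ touches side $AC$, and let $Q$ be the point where the incircle touches $AC$. Since $IQ$ is a radius of the incircle drawn to a point of tangency, we have $IQ\perp AC$ and $|IQ|=r$. Theorem~\ref{thm:inradius} tells us that $|PQ|=|IQ|=r$, because the foot $P$ of the perpendicular from $D$ to $AC$ is precisely the contact point of $\gamma_a$ with $AC$ (the tangent line $AC$ meets $\gamma_a$ at the foot of the perpendicular from its center $D$). Triangle $IQP$ is then right-angled at $Q$ with both legs equal to $r$, so $|IP|=r\sqrt{2}$.

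The same argument applies verbatim to the contact point of $\gamma_a$ with $AB$: reflect the whole configuration in the bisector $AI$, which fixes $I$ and $\gamma_a$ and swaps the two tangent lines. By cyclically permuting the vertices $A,B,C$ (which permutes the triad $\gamma_a,\gamma_b,\gamma_c$ accordingly) one obtains the analogous identity for each of the remaining four contact points. Each of the six points therefore lies at distance $r\sqrt{2}$ from $I$, which is exactly the conclusion of the theorem.

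The only real obstacle is the one already surmounted in Theorem~\ref{thm:inradius}; after that result is in hand, the present theorem reduces to an application of Pythagoras together with a symmetry check. The only thing to justify carefully is that Theorem~\ref{thm:inradius} is genuinely symmetric in the roles of the three circles and in the two sides meeting at a vertex, which is immediate from the symmetric definition of the triad.
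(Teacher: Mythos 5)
Your argument is correct. Note that the paper itself offers no proof of Theorem~\ref{thm:concyclic}: it is listed among the known results imported from \cite{Sup}, so there is no internal argument to compare yours against. Your derivation is a legitimate and short one given the other quoted results: the contact point of $\gamma_a$ with $AC$ is indeed the foot $P$ of the perpendicular from the center $D$ to the tangent line $AC$; Theorem~\ref{thm:inradius} gives $PQ=IQ=r$ with $IQ\perp AC$ at the incircle touch point $Q$; and Pythagoras then yields $IP=r\sqrt{2}$. Your symmetry step is also sound: since $D$ lies on the bisector $AI$, reflection in $AI$ fixes $I$ and $\gamma_a$ and swaps the tangency points on $AB$ and $AC$, so no isosceles assumption is needed, and the cyclic relabeling of $A$, $B$, $C$ covers the remaining four points. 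One small point worth stating explicitly is the reading of ``six points of contact'': these are the tangency points of $\gamma_a$, $\gamma_b$, $\gamma_c$ with the triangle's sides (two per circle), not the three tangency points with the semicircles $\omega_a$, $\omega_b$, $\omega_c$, which do not lie at distance $r\sqrt{2}$ from $I$; with that reading, which is the one intended by Figure~\ref{fig:concyclic}, your proof is complete.
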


\begin{figure}[ht]
\centering
\includegraphics[scale=0.5]{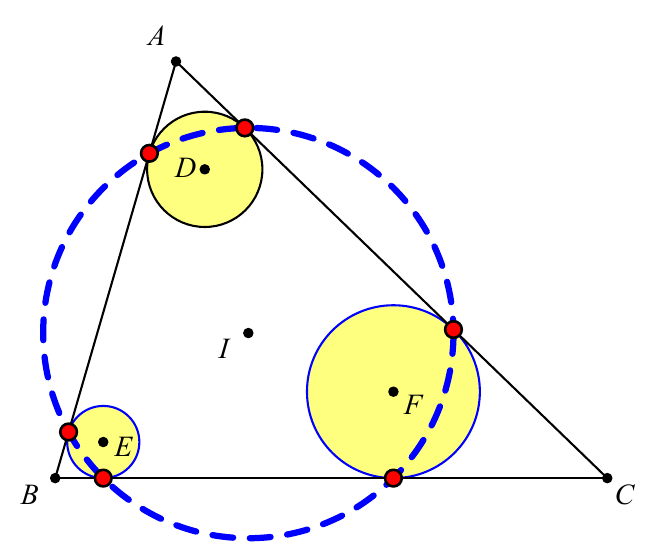}
\caption{}
\label{fig:concyclic}
\end{figure}

This circle will be called the \emph{contact circle}.

The following corollary follows immediately from Theorem~\ref{thm:concyclic}.

\begin{corollary}\label{thm:annulus}
In Figure~\ref{fig:annulus} showing the contact circle and the incircle,
the green area is equal to the blue area.
\end{corollary}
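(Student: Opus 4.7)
The plan is to reduce the claim to the single observation that $(r\sqrt{2})^{2}-r^{2}=r^{2}$, which makes the argument little more than a one-line computation once Theorem~\ref{thm:concyclic} is in hand. The figure shows two concentric circles, both centered at $I$: the incircle (radius $r$) and the contact circle (radius $r\sqrt{2}$, by Theorem~\ref{thm:concyclic}). Concentric circles partition the larger disk into exactly two regions, namely the inner disk and the surrounding annulus, and the colored regions in Figure~\ref{fig:annulus} can only be these two.

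I would therefore first invoke Theorem~\ref{thm:concyclic} to assert that the contact circle is concentric with the incircle, with radius $r\sqrt{2}$. Then I would compute the two areas:
\[
\text{(incircle)} \;=\; \pi r^{2}, \qquad \text{(annulus)} \;=\; \pi(r\sqrt{2})^{2}-\pi r^{2} \;=\; 2\pi r^{2}-\pi r^{2} \;=\; \pi r^{2}.
\]
Since these agree, the green and blue regions in Figure~\ref{fig:annulus} have the same area, which is what the corollary asserts.

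There is no genuine obstacle here, which is why the authors call this an immediate consequence: all the geometric work is absorbed into Theorem~\ref{thm:concyclic}, and the corollary is really just the algebraic identity above made visible. The only thing one must be careful about is matching the colors in the figure to the two regions; but since the incircle and the annulus are the only regions determined by two concentric circles, the identification is unambiguous and the equality of areas follows either way.
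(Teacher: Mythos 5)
Your proposal is correct and is exactly the argument the paper intends: Theorem~\ref{thm:concyclic} gives two concentric circles of radii $r$ and $r\sqrt{2}$ centered at $I$, so the annulus has area $\pi\bigl(2r^{2}\bigr)-\pi r^{2}=\pi r^{2}$, equal to the incircle's area. The paper states the corollary as immediate without writing this out, and your computation is precisely the omitted step.
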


\begin{figure}[ht]
\centering
\includegraphics[scale=0.5]{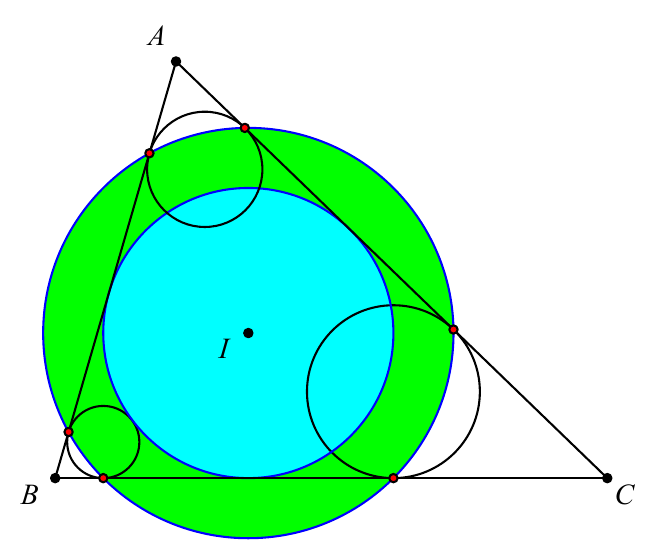}
\caption{green area = blue area}
\label{fig:annulus}
\end{figure}

\begin{theorem}\label{thm:Suppa}
For the triad of circles associated with $\triangle ABC$, we have
\begin{equation}
\rho_a^2+\rho_b^2+\rho_c^2=\frac{r^2(p-4R-r)^2}{p^2}.\label{eq:1}
\end{equation}
\end{theorem}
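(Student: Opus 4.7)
The plan is to reduce everything to half-angle identities by invoking Theorem~\ref{thm:rho}, which lets me replace each $\rho_x$ by a simple trigonometric expression in the angles of $\triangle ABC$. Setting $t_A=\tan(A/2)$, $t_B=\tan(B/2)$, $t_C=\tan(C/2)$, I get
\[
\rho_a^2+\rho_b^2+\rho_c^2
=r^2\sum(1-t_A)^2
=r^2\bigl[3-2(t_A+t_B+t_C)+(t_A^2+t_B^2+t_C^2)\bigr].
\]
So the theorem is equivalent to showing that the bracketed expression equals $(p-4R-r)^2/p^2$.

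Next I would invoke the two classical half-angle identities for a triangle:
\[
t_A+t_B+t_C=\frac{r+4R}{p},\qquad t_At_B+t_Bt_C+t_Ct_A=1.
\]
The first follows from $t_A=r/(p-a)$ combined with $(p-a)(p-b)(p-c)=r^2p$ and $ab+bc+ca=p^2+r^2+4Rr$; the second is the standard identity coming from $A/2+B/2+C/2=\pi/2$. From these,
\[
t_A^2+t_B^2+t_C^2=\left(\frac{r+4R}{p}\right)^2-2.
\]

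Substituting back yields
\[
3-2\cdot\frac{r+4R}{p}+\left(\frac{r+4R}{p}\right)^2-2
=\left(1-\frac{r+4R}{p}\right)^2
=\frac{(p-4R-r)^2}{p^2},
\]
and multiplying by $r^2$ gives \eqref{eq:1}. The only place where any real work is hidden is in recalling (or deriving) the identity $\sum t_A=(r+4R)/p$; everything else is bookkeeping. I do not anticipate any genuine obstacle, since both identities used are standard and the algebra collapses cleanly into a perfect square.
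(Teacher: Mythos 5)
Your argument is correct: the reduction via Theorem~\ref{thm:rho} to $r^2\bigl[3-2\sum\tan\frac A2+\sum\tan^2\frac A2\bigr]$, the identities $\sum\tan\frac A2=\frac{4R+r}{p}$ and $\sum\tan\frac A2\tan\frac B2=1$, and the collapse into the perfect square $\bigl(1-\frac{4R+r}{p}\bigr)^2$ all check out (the justification you sketch for the first identity, using $\tan\frac A2=\frac r{p-a}$, $(p-a)(p-b)(p-c)=r^2p$ and $ab+bc+ca=p^2+r^2+4Rr$, is the standard one and is sound). Note that the paper itself does not prove Theorem~\ref{thm:Suppa}; it is one of the results quoted from \cite{Sup}, so there is no in-text proof to compare against. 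Your derivation is, however, exactly in the spirit of the paper's own toolkit: the proofs given for the companion identities (for instance $\rho_a\rho_b+\rho_b\rho_c+\rho_c\rho_a-2r(\rho_a+\rho_b+\rho_c)+2r^2=0$) likewise start from Theorem~\ref{thm:rho} and then simplify trigonometrically, except that there the simplification is delegated to \textsc{Mathematica}; your version has the merit of being a short, fully hand-checkable computation relying only on the two classical half-angle identities.
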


The following result comes from \cite{ETC1123} where it is stated that the result is due to Tomasz Cie\'sla.

\begin{theorem}\label{thm:Ciesla}
Let $T_a$, $T_b$, and $T_c$ be the touch points of the circles in the triad
with their corresponding semicircles as shown in Figure~\ref{fig:Ciesla}.
Then $AT_a$, $BT_b$, and $CT_c$ are concurrent.
\end{theorem}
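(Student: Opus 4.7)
My plan is to apply Ceva's theorem to the cevians $AT_a$, $BT_b$, $CT_c$ (each of which meets the opposite side because $T_a, T_b, T_c$ lie in the interior of $\triangle ABC$). Two structural observations set up the computation. First, since $\gamma_a$ is tangent to both $AB$ and $AC$, its center $D$ lies on the internal bisector of $\angle A$, and Theorem~\ref{thm:rho} gives $AD = AI\bigl(1-\tan(A/2)\bigr)$, so $D$ is an explicit affine combination of $A$ and the incenter $I$. Second, external tangency of $\gamma_a$ with $\omega_a$ places $T_a$ on segment $DM_a$ (with $M_a$ the midpoint of $BC$) at $T_a = \tfrac{a/2}{\rho_a+a/2}\,D + \tfrac{\rho_a}{\rho_a+a/2}\,M_a$.

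Next I would work in Cartesian coordinates with $B=(0,0)$, $C=(a,0)$, $A=(A_x,A_y)$, in which $I_x = (aA_x+ca)/(2p)$ and $r = I_y = aA_y/(2p)$. Substituting the closed forms for $D$ and $T_a$ and intersecting the line $AT_a$ with the $x$-axis yields the foot $N_a$ on $BC$; the key identity $A_y I_x - A_x I_y = acA_y/(2p)$ causes all dependence on $\rho_a$ to drop out, leaving the clean ratio
\[
\frac{BN_a}{N_aC} \;=\; \frac{S+ac}{S+ab}, \qquad S := 2pr = 2\Delta.
\]

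By the cyclic symmetry of the construction the remaining ratios are $CN_b/N_bA = (S+ab)/(S+bc)$ and $AN_c/N_cB = (S+bc)/(S+ac)$, and their product is $1$, so Ceva's theorem delivers the concurrency. As a byproduct the common point has barycentric coordinates $\bigl(1/(S+bc):1/(S+ca):1/(S+ab)\bigr)$, which should match the triangle center X(1123) referenced in the statement.

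The main obstacle is the algebraic simplification: one must recognize that the parameters $\rho_a/r$ and $2\rho_a/(2\rho_a+a)$ arising in $BN_a$ and $N_aC$ must collapse to leave only the side lengths and $pr$. One should anticipate this cancellation \emph{a priori}, since otherwise the Ceva product could not simplify to $1$ without invoking the nontrivial relation between $\rho_a,\rho_b,\rho_c$ given by Theorem~\ref{thm:Sangaku}; once this cancellation is expected, the computation is routine.
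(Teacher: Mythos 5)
Your proposal is correct and takes essentially the same route as the paper: both proofs rest on writing $T_a$ as the point dividing the segment $DM_a$ in the ratio $\rho_a:\tfrac a2$ and on a coordinate computation in which $\rho_a$ cancels, producing the cevian-foot ratio $BN_a:N_aC=(S+ac):(S+ab)$ --- which is exactly the paper's line $AT_a:\;(ac+S)y-(ab+S)z=0$ in barycentrics --- and the same perspector $X_{1123}=\bigl(1/(S+bc):1/(S+ca):1/(S+ab)\bigr)$. The only cosmetic differences are your use of Cartesian coordinates with Ceva's theorem (and of Theorem~\ref{thm:rho} to place $D$ on $AI$) in place of the paper's barycentric coordinates from Theorem~\ref{thm2} and direct verification that the three lines pass through the stated point.
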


\begin{figure}[ht]
\centering
\includegraphics[scale=1.3]{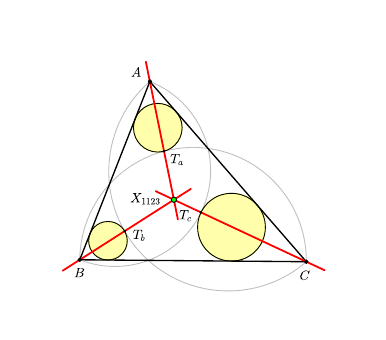}
\caption{}
\label{fig:Ciesla}
\end{figure}

The point of concurrence is catalogued as point $X_{1123}$ in the Encyclopedia of Triangle Centers \cite{ETC1123}.
Since reference \cite{ETC1123} does not include a proof of this result, we will give our own proof
later in Section~\ref{section:Paasche} of this paper.

The point $X_{1123}$ is known as the \emph{Paasche point} of the triangle because Paasche
proved the following result in \cite{Paasche}.

\begin{theorem}\label{thm:Paasche}
Congruent circles with centers $A_1$ and $A_2$ touch each other externally at point $A'$ outside $\triangle ABC$.
Circle $(A_1)$ is tangent to $AB$ and $BC$.
Circle $(A_2)$ is tangent to $AC$ and $BC$.
Points $B'$ and $C'$ are defined similarly (Figure~\ref{fig:Paasche}).
Then $AA'$, $BB'$, and $CC'$ are concurrent.
\end{theorem}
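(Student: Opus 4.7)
The plan is to establish concurrency by the trigonometric form of Ceva's theorem, for which I will compute the ratio $\sin\angle BAA'/\sin\angle CAA'$ and observe that the cyclic product telescopes to $1$.

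First, I would set up Cartesian coordinates with $B=(0,0)$, $C=(a,0)$, and $A=(c\cos B,\,c\sin B)$. Since the circles $(A_1)$ and $(A_2)$ are congruent and lie on the side of line $BC$ opposite to $A$, both centers have $y$-coordinate $-t_a$, where $t_a$ denotes the common radius. Tangency of $(A_1)$ to lines $AB$ and $BC$ forces its center onto the external angle bisector at $B$, which together with the height condition gives
\[
A_1=(t_a\tan(B/2),\,-t_a),\qquad A_2=(a-t_a\tan(C/2),\,-t_a).
\]
The external tangency $|A_1A_2|=2t_a$ then forces
\[
t_a=\frac{a}{2+\tan(B/2)+\tan(C/2)},
\]
and $A'$ is simply the midpoint of $A_1A_2$, so $A'_x=\tfrac12[a+t_a(\tan(B/2)-\tan(C/2))]$ and $A'_y=-t_a$.

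Next I would compute the Ceva ratio using $\sin\theta=|\vec u\times\vec v|/(|\vec u|\,|\vec v|)$. Expanding $\vec{AB}\times\vec{AA'}$ and $\vec{AC}\times\vec{AA'}$ and applying the projection identities $a-c\cos B=b\cos C$ and $c\sin B=b\sin C$, the factors $|AB|=c$ and $|AC|=b$ cancel cleanly and the ratio reduces to
\[
\frac{\sin\angle BAA'}{\sin\angle CAA'}=\frac{t_a\cos B+A'_x\sin B}{t_a\cos C+(a-A'_x)\sin C}.
\]
The decisive identity is $\cos X+\tan(X/2)\sin X=1$ (an immediate rewrite of $1-\cos X=2\sin^2(X/2)$), which lets me replace $t_a\cos B$ by $t_a-t_a\tan(B/2)\sin B$. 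Substituting this and the explicit value of $A'_x$, the numerator reorganizes to $t_a+\tfrac12\sin B\bigl[a-t_a(\tan(B/2)+\tan(C/2))\bigr]$. By the defining equation of $t_a$ the bracketed quantity equals $2t_a$, so the numerator collapses to $t_a(1+\sin B)$. The same calculation produces $t_a(1+\sin C)$ for the denominator, so
\[
\frac{\sin\angle BAA'}{\sin\angle CAA'}=\frac{1+\sin B}{1+\sin C}.
\]

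The two cyclic analogues $(1+\sin C)/(1+\sin A)$ and $(1+\sin A)/(1+\sin B)$ follow by the same argument applied to sides $CA$ and $AB$, and their product with the above telescopes to $1$. Trigonometric Ceva then yields the concurrency of $AA'$, $BB'$, $CC'$. The only delicate step is the trigonometric collapse above; its engine is the observation that $a-t_a(\tan(B/2)+\tan(C/2))=2t_a$ by the very definition of $t_a$. As a byproduct, the Ceva ratios give the barycentric coordinates of the Paasche point as $a/(1+\sin A):b/(1+\sin B):c/(1+\sin C)$.
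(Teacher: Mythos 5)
Your argument is correct, and it supplies something the paper itself does not contain: Theorem~\ref{thm:Paasche} is quoted there as Paasche's result from \cite{Paasche} with no proof given (the only concurrency the paper proves in detail is the different configuration of Theorem~\ref{thm3}, done with barycentric coordinates). Your route is Cartesian coordinates plus the trigonometric form of Ceva, and the computations check out: with $B=(0,0)$, $C=(a,0)$ the centers do lie on the external bisectors at $B$ and $C$, giving $A_1=(t_a\tan\frac B2,-t_a)$, $A_2=(a-t_a\tan\frac C2,-t_a)$, the tangency condition gives $a-t_a(\tan\frac B2+\tan\frac C2)=2t_a$, and the identity $\tan\frac X2\sin X=1-\cos X$ collapses the Ceva ratio to $(1+\sin B)/(1+\sin C)$ exactly as you claim, so the cyclic product is $1$. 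Your byproduct is also consistent with the paper: since $\sin A=\frac a{2R}$ and $S=\frac{abc}{2R}$, one has $\frac a{1+\sin A}=\frac{2Ra}{2R+a}$, which is proportional to $\frac1{bc+S}$, so $a/(1+\sin A):b/(1+\sin B):c/(1+\sin C)$ is precisely the paper's $X_{1123}=(ab+S)(ac+S):(ab+S)(bc+S):(ac+S)(bc+S)$; this is a nice independent confirmation that the perspector is the Paasche point. Two small points worth making explicit if you write this up: (i) the circle $(A_1)$ touches line $AB$ on its extension beyond $B$ (your formula forces the touch point below $BC$), which is the intended reading of ``tangent to $AB$'' in Paasche's excircle-pair configuration; and (ii) for the converse of Ceva one should note that each cevian meets the opposite side internally, which your computation in fact shows, since the trace of $AA'$ on $BC$ is at distance $\frac{c\,t_a(1+\sin B)}{t_a+c\sin B}>0$ from $B$ and $\frac{b\,t_a(1+\sin C)}{t_a+c\sin B}>0$ from $C$, so no degenerate (parallel) case can occur. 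Compared with the paper's barycentric machinery for Theorem~\ref{thm3}, your approach is more elementary and self-contained, at the cost of a coordinate computation that must be organized carefully; it would fit naturally alongside the paper's Section~\ref{section:Paasche} as the missing proof of Theorem~\ref{thm:Paasche}.
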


\begin{figure}[ht]
\centering
\includegraphics[scale=0.4]{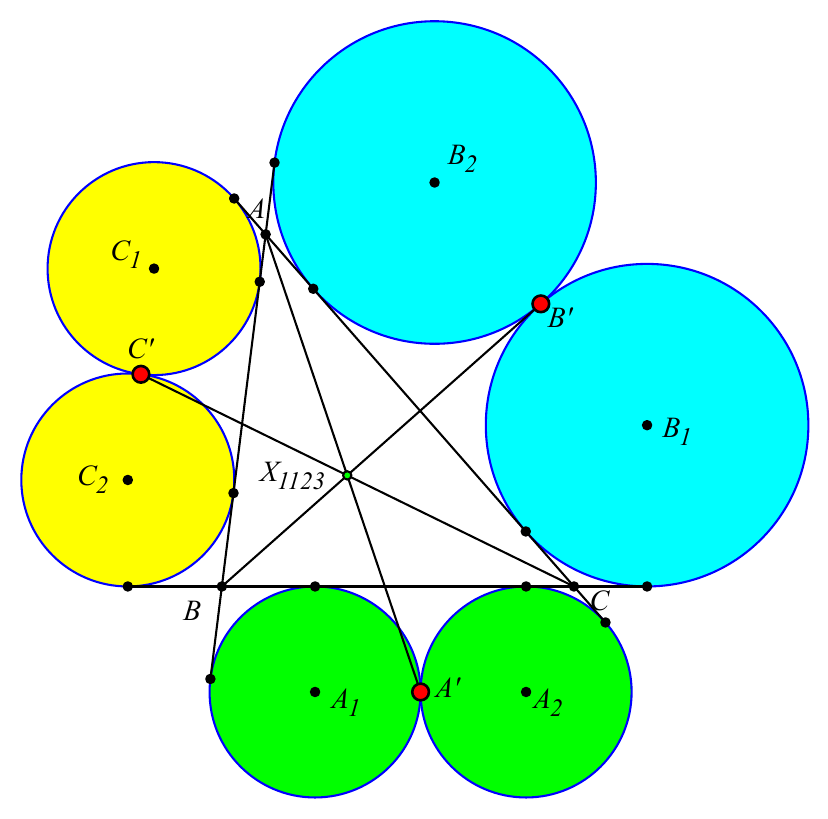}
\caption{}
\label{fig:Paasche}
\end{figure}

\textbf{Remark.}
The same result is true if the pairs of congruent circles
are inside the triangle instead of outside.
Figure~\ref{fig:Paasche2} illustrates this.
(Only the two congruent circles tangent to side $BC$ are shown.)
This result comes from \cite[Art.~3.5.4, ex.~4c]{Yiu}.

\begin{figure}[ht]
\centering
\includegraphics[scale=0.7]{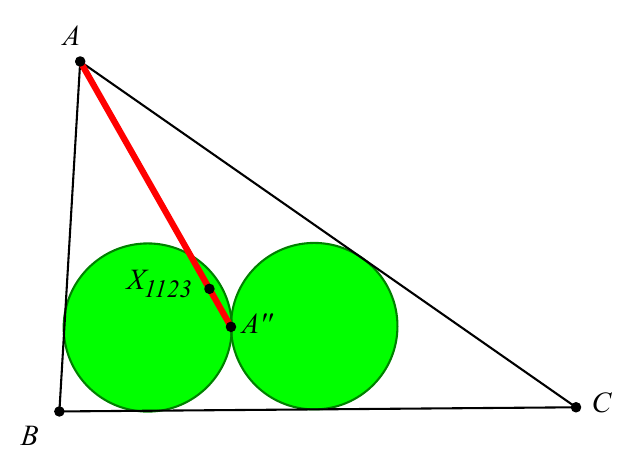}
\caption{}
\label{fig:Paasche2}
\end{figure}

The Paasche point can also be characterized as follows according to \cite{Capitan}.

\begin{theorem}\label{thm:Paasche3}
In $\triangle ABC$, let $\omega_a$, $\omega_b$, and $\omega_c$ be the circles constructed using
sides $BC$, $CA$, and $AB$, respectively, as diameters.
Let $\Omega$ be the circle internally tangent to $\omega_a$, $\omega_b$, and $\omega_c$.
Let $A'$ be the touch point between $\omega_a$ and $\Omega$.
Points $B'$ and $C'$ are defined similarly (Figure~\ref{fig:Paasche3}).
Then $AA'$, $BB'$, and $CC'$ are concurrent at $X_{1123}$, the Paasche point of $\triangle ABC$.
\end{theorem}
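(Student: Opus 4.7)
The plan is to reduce Theorem~\ref{thm:Paasche3} to Theorem~\ref{thm:Ciesla} via the following auxiliary claim.

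\textbf{Key Claim.} For each vertex, the points $A$, $T_a$, and $A'$ are collinear (and cyclically for $B, T_b, B'$ and $C, T_c, C'$). Here $T_a$ is the tangency point of $\gamma_a$ with $\omega_a$ from Theorem~\ref{thm:Ciesla}, while $A'$ is the tangency point of $\Omega$ with $\omega_a$ in the present statement. Both lie on $\omega_a$. Geometrically, the Key Claim says that the cevian $AX_{1123}$ (which passes through $T_a$ by Theorem~\ref{thm:Ciesla}) is a secant of $\omega_a$ meeting it precisely at $T_a$ and at $A'$. Once established, the line $AA'$ coincides with the line $AT_a$, and Theorem~\ref{thm:Ciesla} immediately yields concurrence of $AA', BB', CC'$ at $X_{1123}$.

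To prove the Key Claim I would pass to Cartesian coordinates. The point $T_a$ is in closed form: by Theorem~\ref{thm:rho}, $\rho_a = r(1-\tan(A/2))$, the center $D$ of $\gamma_a$ lies on the bisector $AI$, and $T_a = M_a + (a/2)(D-M_a)/|D-M_a|$, where $|D-M_a| = a/2 + \rho_a$ by the external tangency of $\gamma_a$ and $\omega_a$. To find $A'$, I would first determine $\Omega$: with center $S$ and radius $\sigma$, the internal-tangency conditions $|S-M_a| + a/2 = |S-M_b| + b/2 = |S-M_c| + c/2 = \sigma$ (the outer Apollonius circle, containing $\omega_a, \omega_b, \omega_c$) reduce, upon squaring and subtracting pairs, to a linear system expressing $S$ affinely in $\sigma$; back-substitution then gives a single quadratic in $\sigma$. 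The appropriate root determines $\Omega$, and one sets $A' = M_a + (a/2)(M_a-S)/|M_a-S|$. The Key Claim then amounts to the vanishing of the $3 \times 3$ determinant formed by $A$, $T_a$, $A'$ with a column of $1$'s.

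The main obstacle is the algebraic weight of the square roots in the expression for $S$; however, after clearing radicals, the collinearity condition becomes a polynomial identity in $a, b, c$, which I would check using Mathematica, in line with the paper's stated tools. As a sanity check before launching the full computation, I would verify the Key Claim numerically in both an equilateral and a scalene example (where, in the latter, $A'$ falls on $\omega_a$ on the opposite side of $BC$ from $T_a$, with $AT_a\cdot AA'$ equalling the power of $A$ with respect to $\omega_a$). A synthetic alternative, perhaps via Monge's three-circle theorem applied to the quadruple $\omega_a, \omega_b, \omega_c, \Omega$, or via an inversion centered at $T_a$ sending $\omega_a$ and $\gamma_a$ to parallel lines, seems plausible but I do not see how to execute it cleanly, and so I would fall back on the coordinate verification.
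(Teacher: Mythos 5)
The paper itself offers no proof of Theorem~\ref{thm:Paasche3}: it is quoted as a known characterization from \cite{Capitan}, and the only related argument supplied internally is the barycentric proof of Theorem~\ref{thm3}, which computes $T_a$ and shows that the lines $AT_a$, $BT_b$, $CT_c$ meet at $X_{1123}$. So your route is necessarily different from anything in the paper, and it is a legitimate one: reducing Theorem~\ref{thm:Paasche3} to Theorem~\ref{thm3} via the Key Claim that $A$, $T_a$, $A'$ are collinear is sound, and the Key Claim is in fact true (granting both theorems, the lines $AT_a$ and $AA'$ each join $A$ to $X_{1123}$ and hence coincide, and both $T_a$ and $A'$ lie on $\omega_a$). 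Proving that collinearity independently, as you propose, would therefore indeed deliver the theorem once Theorem~\ref{thm3} is available, and your consistency remarks (that $A'$ lies on the arc of $\omega_a$ on the far side of $BC$, and that $AT_a\cdot AA'$ equals the power of $A$, which is $bc\cos A>0$ for an acute triangle) are correct.

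The weakness is that, as written, the decisive content is entirely deferred: you have not determined the center and radius of $\Omega$, not verified the collinearity determinant after clearing the two radicals ($\Delta$ and the root $\sigma$ of your quadratic), and not explained how the ``appropriate root'' is to be recognized algebraically — a real issue, since the same squared system also produces the inner Apollonius circle of $\omega_a$, $\omega_b$, $\omega_c$ (the circle appearing in Theorem~\ref{thm:tangentCircles}), and your identity is only claimed for the outer one. By the paper's own standards, in which several identities are certified by \textsc{Mathematica}, the plan is acceptable in kind, but what you have submitted is an outline rather than a proof. If you carry it out, a lighter execution stays closer to the paper's toolkit: work in barycentric coordinates as in the proof of Theorem~\ref{thm3}, where the line $AT_a$ is already known to be $(ac+S)y-(ab+S)z=0$; intersect it with $\omega_a$ (whose barycentric equation is standard) to get the second point $A''$ rationally, then verify that the circle through the three points $A''$, $B''$, $C''$ so obtained is internally tangent to $\omega_a$, $\omega_b$, $\omega_c$, which identifies it with $\Omega$ and proves the Key Claim without ever parametrizing $\Omega$ by a square root.
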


\begin{figure}[ht]
\centering
\includegraphics[scale=0.7]{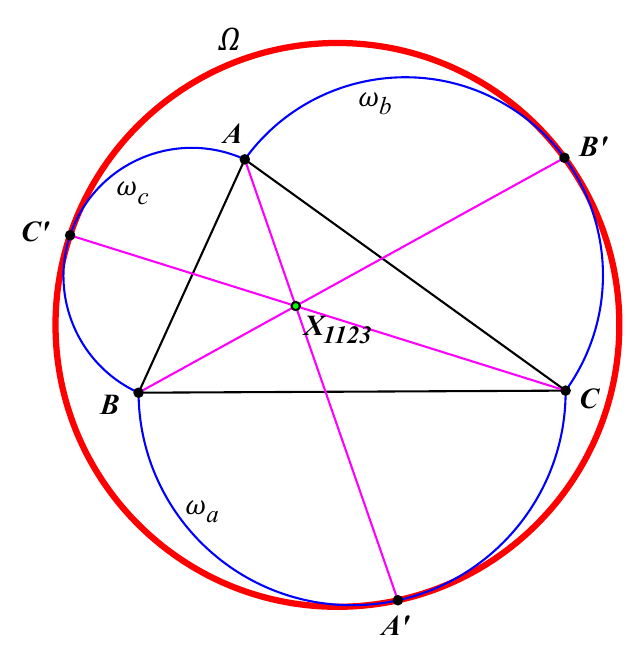}
\caption{}
\label{fig:Paasche3}
\end{figure}

A circle that is tangent to three given circles is called an \emph{Apollonius circle} of those three circles.

If all three circles lie inside an Apollonius circle, then the Apollonius circle
is called the \emph{outer Apollonius circle} of the three circles.
The outer Apollonius circle surrounds the three circles and is internally tangent to all three.

If all three circles lie outside an Apollonius circle, then the Apollonius circle
is called the \emph{inner Apollonius circle} of the three circles.
The inner Apollonius circle will either be internally tangent to the three given circles
or it will be externally tangent to all the circles.
The following theorem comes from \cite{ETC52805}.

\begin{theorem}\label{thm:tangentCircles}
For the triad of circles associated with $\triangle ABC$,
the inner Apollonius circle of $\gamma_a$, $\gamma_b$, $\gamma_c$, is internally tangent
to the inner Apollonius circle of $\omega_a$, $\omega_b$, $\omega_c$ (Figure~\ref{fig:tangentCircles}).
\end{theorem}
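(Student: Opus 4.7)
The plan is to reduce the theorem to an algebraic identity by placing the six given circles in coordinates and computing the two Apollonius circles explicitly.

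First, I would collect data on the six circles. The semicircles $\omega_a,\omega_b,\omega_c$ have known centers---the midpoints of the sides---and radii $a/2,b/2,c/2$. By Theorem~\ref{thm:rho}, the triad has radii $\rho_a=r(1-\tan(A/2))$ (and cyclic), while the centers $D,E,F$ lie on the respective internal angle bisectors at the distances $\rho_a/\sin(A/2)$, etc., from the vertices. All of this data is manageable in either barycentric or Cartesian coordinates, and the incenter $I$ of $\triangle ABC$ is a convenient origin because of the symmetric role it plays in the triad (cf.\ Theorem~\ref{thm:concyclic}).

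Next, for each triple of circles I would solve for the inner Apollonius circle by imposing its three tangency conditions. Writing the unknown circle as having center $Z$ and radius $\varrho$, each condition reads $|ZO_j|=|\varrho\pm r_j|$ with a sign fixed by the nature of the tangency; the three resulting equations single out the inner Apollonius circle. Let $\Omega_1=(Z_1,R_1)$ denote the inner Apollonius circle of the $\omega$-triple---this is the circle $\Omega$ of Theorem~\ref{thm:Paasche3}---and let $\Omega_2=(Z_2,R_2)$ denote the inner Apollonius circle of the $\gamma$-triple. Internal tangency between $\Omega_1$ and $\Omega_2$ then reduces to the single identity $|Z_1Z_2|=|R_1-R_2|$, which after squaring and clearing denominators becomes a polynomial relation in $a,b,c$. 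I would verify this identity by symbolic computation in Mathematica.

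The main obstacle is algebraic bulk rather than conceptual subtlety: each Apollonius system produces nested square roots from the discriminants of the resulting quadratics, and the final identity $|Z_1Z_2|^2=(R_1-R_2)^2$ is not visibly factorable by hand, so the verification has to be pushed through by a computer algebra system. A more elegant proof might emerge from identifying the point of tangency geometrically---the simultaneous appearance of the Paasche point $X_{1123}$ in Theorems~\ref{thm:Ciesla} and~\ref{thm:Paasche3} hints at a possible role for $X_{1123}$ as the common tangency point, in which case one could replace the distance calculation by checking that the tangent lines to $\Omega_1$ and $\Omega_2$ at $X_{1123}$ coincide---but absent such an insight the explicit coordinate verification outlined above is the most direct route to a complete proof.
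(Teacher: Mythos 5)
A point of information first: the paper does not prove this theorem at all --- it is quoted from the Encyclopedia of Triangle Centers \cite{ETC52805} --- so there is no proof of record here to compare yours with, and your attempt has to stand on its own. As it stands it does not: what you have written is a plan for a computation, not the computation. The entire content of the theorem is concentrated in the final identity $|Z_1Z_2|^2=(R_1-R_2)^2$, and you neither write down the centers $Z_1,Z_2$ and radii $R_1,R_2$ of the two Apollonius circles nor exhibit, even in summary form, the symbolic verification; ``I would verify this identity by Mathematica'' leaves the theorem exactly where it started. In a paper that routinely records the output of such computations (the explicit radical axes in Theorem~\ref{thm2a}, the factorization $f(x)g(x)=0$ in Theorem~\ref{thm4}), a CAS-based proof is acceptable only if the intermediate closed forms are actually produced.

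There are also two gaps inside the plan itself. Each tangency system $|ZO_j|=|\varrho\pm r_j|$ has up to eight solutions, and the sign pattern that isolates the \emph{inner} Apollonius circle is different for the two triples: for $\gamma_a,\gamma_b,\gamma_c$ the inner circle is \emph{externally} tangent to all three (this is the circle of Theorem~\ref{thm4}, with center $X(52805)$ and radius $\rho_i=r(4R+r-p)/p$), whereas for the circles on the sides as diameters the inner circle is \emph{internally} tangent to all three (the circle $\Omega$ of Theorem~\ref{thm:Paasche3}). Saying the signs are ``fixed by the nature of the tangency'' presupposes the configuration you are supposed to be establishing; you must either justify the sign choices geometrically or verify a posteriori that the selected solutions are indeed the inner circles. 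Likewise, the closing speculation that the common tangency point is the Paasche point $X_{1123}$ is unsupported and should not be leaned on. A realistic way to finish along your lines is to use what the paper already supplies: take $\rho_i$ and the center $U=X(52805)$ from Theorem~\ref{thm4} and its Remark, compute the center and radius of $\Omega$ by the same method of \cite{Ste} used in Lemma~\ref{lemma1} and Theorem~\ref{thm4}, and then check the single equation $|UZ_1|=R_1-\rho_i$ in barycentric coordinates; until such a check is actually carried out and recorded, the theorem remains unproved.
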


\textbf{Remark.} The inner Apollonius circle of $\gamma_a$, $\gamma_b$, $\gamma_c$
is known as the 1st Miyamoto-Moses-Apollonius circle and the outer Apollonius circle of $\gamma_a$, $\gamma_b$, $\gamma_c$
is known as the 2nd Miyamoto-Moses-Apollonius circle (see \cite{ETC52805}).

\begin{figure}[ht]
\centering
\includegraphics[scale=0.5]{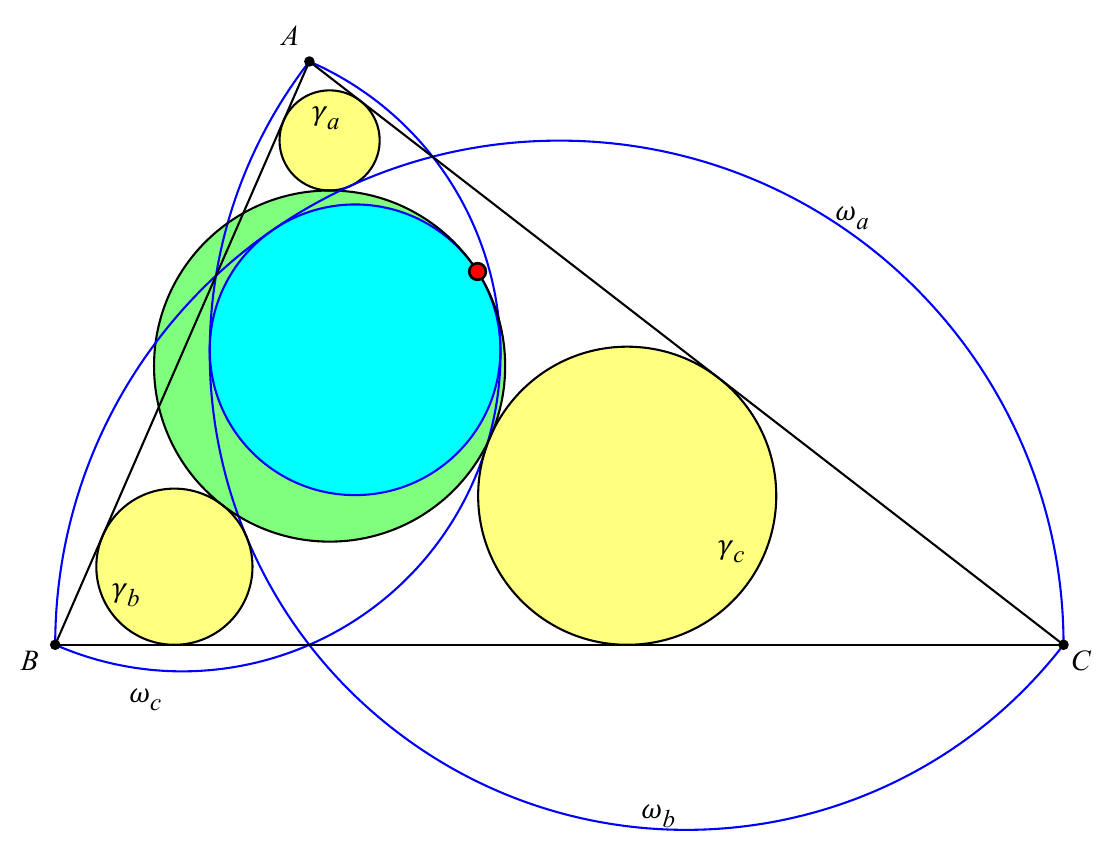}
\caption{}
\label{fig:tangentCircles}
\end{figure}

\section{Metric Relationships involving $\rho_a$, $\rho_b$, $\rho_c$}

In addition to Theorem~\ref{thm:Sangaku} and Theorem~\ref{thm:Suppa}, the following
symmetric relationship involving $\rho_a$, $\rho_b$, and $\rho_c$ holds.

\begin{theorem}
For the triad of circles associated with $\triangle ABC$, we have
\begin{equation*}
\rho_a\rho_b+\rho_b\rho_c+\rho_c\rho_a-2r(\rho_a+\rho_b+\rho_c)+2r^2=0.
\end{equation*}
\end{theorem}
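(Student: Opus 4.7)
The plan is to reduce the identity to a single trigonometric identity in the half-angles of the triangle, using Theorem~\ref{thm:rho} as the main input. Write $t_a=\tan(A/2)$, $t_b=\tan(B/2)$, $t_c=\tan(C/2)$, so that Theorem~\ref{thm:rho} gives
\[
\rho_a=r(1-t_a),\qquad \rho_b=r(1-t_b),\qquad \rho_c=r(1-t_c).
\]
This turns the target identity into a polynomial identity in $t_a,t_b,t_c$, with $r^2$ as a common factor.

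Next I would expand the two symmetric sums. Setting $\sigma_1=t_a+t_b+t_c$ and $\sigma_2=t_at_b+t_bt_c+t_ct_a$, one gets
\[
\rho_a+\rho_b+\rho_c=r(3-\sigma_1),\qquad
\rho_a\rho_b+\rho_b\rho_c+\rho_c\rho_a=r^2(3-2\sigma_1+\sigma_2).
\]
Substituting into $\rho_a\rho_b+\rho_b\rho_c+\rho_c\rho_a-2r(\rho_a+\rho_b+\rho_c)+2r^2$ makes the $\sigma_1$ terms cancel, leaving $r^2(\sigma_2-1)$.

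The only remaining ingredient is the classical half-angle identity $\sigma_2=1$, i.e.
\[
\tan\frac{A}{2}\tan\frac{B}{2}+\tan\frac{B}{2}\tan\frac{C}{2}+\tan\frac{C}{2}\tan\frac{A}{2}=1,
\]
which holds in any triangle because $A/2+B/2+C/2=\pi/2$ (and hence $\tan(C/2)=\cot((A+B)/2)$, whence the identity follows from expanding $\tan((A+B)/2)$). Applying this immediately yields $r^2(\sigma_2-1)=0$, completing the proof.

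There is essentially no obstacle here: the whole argument is a short symmetric-function computation once Theorem~\ref{thm:rho} is invoked. The only mildly non-routine step is recalling (or deriving) the half-angle identity $\sigma_2=1$, which is the reason the constant term $2r^2$ in the statement comes out exactly right.
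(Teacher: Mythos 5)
Your proof is correct and follows essentially the same route as the paper: both substitute $\rho_a=r(1-\tan\frac A2)$ etc.\ from Theorem~\ref{thm:rho} and reduce the claim to a half-angle identity forced by $A+B+C=\pi$. The only difference is that you carry out the simplification by hand via symmetric functions and the classical identity $\tan\frac A2\tan\frac B2+\tan\frac B2\tan\frac C2+\tan\frac C2\tan\frac A2=1$, whereas the paper lets \textsc{Mathematica} reduce the expression to $-r^2\cos\left(\frac{A+B+C}{2}\right)\sec\frac A2\sec\frac B2\sec\frac C2$, which is the same fact in disguise; your version is a bit more self-contained.
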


\begin{proof}
From Theorem~\ref{thm:rho}, we have $\rho_a=r(1-\tan\frac A2)$,
$\rho_b=r(1-\tan\frac B2)$, and $\rho_c=r(1-\tan\frac C2)$.
Substituting these values into the expression
$$\rho_a\rho_b+\rho_b\rho_c+\rho_c\rho_a-2r(\rho_a+\rho_b+\rho_c)+2r^2$$
and simplifying (using \textsc{Mathematica}), shows that the expression is equal to
$$-r^2\cos\left(\frac{A+B+C}{2}\right)\sec\frac A2\sec\frac B2\sec \frac C2.$$
Since $A+B+C=\pi$, this expression is equal to 0.
\end{proof}

\begin{theorem}
For the triad of circles associated with $\triangle ABC$, we have
\begin{equation*}
p(r-\rho_a)(r-\rho_b)(r-\rho_c)=r^4.
\end{equation*}
\end{theorem}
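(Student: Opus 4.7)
The plan is to reduce the identity directly to a standard trigonometric identity for the half-angles of a triangle, using Theorem~\ref{thm:rho}.

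First I would apply Theorem~\ref{thm:rho} in the form $r-\rho_a = r\tan\frac{A}{2}$, and likewise for $B$ and $C$. Multiplying these three relations gives
\[
(r-\rho_a)(r-\rho_b)(r-\rho_c) = r^3 \tan\tfrac{A}{2}\tan\tfrac{B}{2}\tan\tfrac{C}{2}.
\]
So the theorem is equivalent to showing $\tan\frac{A}{2}\tan\frac{B}{2}\tan\frac{C}{2} = r/p$.

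Next I would establish this half-angle identity, which is classical. Using $\tan\frac{A}{2}=\frac{r}{p-a}$ (and its cyclic analogues), together with Heron's formula in the form $(p-a)(p-b)(p-c)=\Delta^2/p = r^2 p$ (since $\Delta = rp$), one gets
\[
\tan\tfrac{A}{2}\tan\tfrac{B}{2}\tan\tfrac{C}{2} = \frac{r^3}{(p-a)(p-b)(p-c)} = \frac{r^3}{r^2 p} = \frac{r}{p}.
\]

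Substituting back yields $(r-\rho_a)(r-\rho_b)(r-\rho_c) = r^4/p$, which rearranges to the claim. There is essentially no main obstacle: the entire argument is an immediate consequence of Theorem~\ref{thm:rho} combined with the well-known half-angle product identity, so the proof should be only a few lines.
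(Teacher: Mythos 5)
Your proposal is correct and follows essentially the same route as the paper: the paper also deduces the result from Theorem~\ref{thm:rho} together with the identity $\tan\frac{A}{2}\tan\frac{B}{2}\tan\frac{C}{2}=\frac{r}{p}$ (which it simply cites, whereas you also supply its short derivation via $\tan\frac{A}{2}=\frac{r}{p-a}$ and Heron's formula).
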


\begin{proof}
This result follows from Theorem~\ref{thm:rho} and the trigonometric identity
$$\tan\frac A2\tan\frac B2\tan\frac C2=\frac rp$$
which comes from \cite[p.~358]{Andreescu}.
\end{proof}

\begin{theorem}\label{thm:rho1} 
For the triad of circles associated with $\triangle ABC$, we have
\begin{equation}
\rho_a=\frac{\Delta-(p-b)(p-c)}{p}.\label{eqn:1}
\end{equation}
Similar formulas hold for $\rho_b$ and $\rho_c$.
\end{theorem}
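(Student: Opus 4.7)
The plan is to start from Theorem~\ref{thm:rho}, which gives $\rho_a = r\bigl(1-\tan\tfrac{A}{2}\bigr)$, and then rewrite the right-hand side using standard half-angle identities for a triangle so that the answer comes out in the symmetric form $\tfrac{\Delta - (p-b)(p-c)}{p}$.

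The two ingredients I would invoke are the identity $r = \Delta/p$ (equivalent to the well-known area formula $\Delta = rp$) together with the half-angle identity
\begin{equation*}
\tan\frac{A}{2} = \frac{(p-b)(p-c)}{\Delta},
\end{equation*}
which follows immediately from $\tan\tfrac{A}{2} = \sqrt{(p-b)(p-c)/(p(p-a))}$ and Heron's formula $\Delta^2 = p(p-a)(p-b)(p-c)$. Both of these are entirely standard and can be cited without calculation.

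Substituting into Theorem~\ref{thm:rho}:
\begin{equation*}
\rho_a = r\left(1-\tan\frac{A}{2}\right) = \frac{\Delta}{p}\left(1 - \frac{(p-b)(p-c)}{\Delta}\right) = \frac{\Delta-(p-b)(p-c)}{p}.
\end{equation*}
The similar formulas for $\rho_b$ and $\rho_c$ are obtained by cyclic permutation. There is no real obstacle here; the only minor point is choosing which half-angle expression to plug in, and the form $\tan\tfrac{A}{2} = (p-b)(p-c)/\Delta$ is exactly the one that clears the denominator $\Delta$ in $r$ and leaves the clean expression demanded by the theorem.
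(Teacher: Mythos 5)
Your proposal is correct and essentially the same as the paper's proof: both start from Theorem~\ref{thm:rho}, substitute $r=\Delta/p$ and a standard half-angle identity, and invoke Heron's formula. The only cosmetic difference is that you plug in $\tan\frac{A}{2}=\frac{(p-b)(p-c)}{\Delta}$ directly, whereas the paper uses $\tan\frac{A}{2}=\frac{r}{p-a}$ and applies Heron's formula midway through the computation.
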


\begin{proof} 
Using Theorem~\ref{thm:rho} and the well-known identities
\begin{equation*}
  \tan \frac{A}{2}=\frac{r}{p-a} \qquad\hbox{and}\qquad r=\frac{\Delta}{p}, 
\end{equation*}
we get
\begin{align} \label{eqn:2}
   \rho_a&=r \left(1-\frac{r}{p-a}\right)=r-\frac{r^2}{p-a}=\frac{\Delta}{p}-\frac{\Delta^2}{p^2 (p-a)} \\ 
				 &=\frac{\Delta}{p}-\frac{p(p-a)(p-b)(p-c)}{p^2(p-a)}=\frac{\Delta-(p-b)(p-c)}{p}.
\end{align}
This complete the proof.
\end{proof}

\section{Barycentric coordinates of centers of $\gamma_a$, $\gamma_b$, $\gamma_c$}

\begin{theorem}\label{thm2} 
The barycentric coordinates of the center of $\gamma_a$ are
\begin{equation*}
      D=aS+2(p-b)(p-c)(b+c):bS-2b(p-b)(p-c):cS-2c(p-b)(p-c).
\end{equation*}
\end{theorem}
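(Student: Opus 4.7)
The plan is to pin down $D$ by two conditions: it lies on the internal bisector of angle $A$, and its distance from $A$ along that bisector is controlled by $\rho_a$.

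First, since $\gamma_a$ is tangent to both $AB$ and $AC$, its center $D$ must lie on the internal bisector of $\angle A$. Because $A=(1:0:0)$ and $I=(a:b:c)$, every point of line $AI$ has barycentric coordinates of the form $(x:b:c)$. Observing that the proposed second and third coordinates factor as $b[S-2(p-b)(p-c)]$ and $c[S-2(p-b)(p-c)]$, this reduces the problem to verifying a single ratio.

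Next, I would parametrize $D=(1-t)A+tI$ with $t=AD/AI$. Dropping perpendiculars from $D$ and $I$ onto $AC$ and using the right triangles at $A$, one gets $AD=\rho_a/\sin(A/2)$ and $AI=r/\sin(A/2)$, so $t=\rho_a/r$. Then Theorem~\ref{thm:rho1} together with $r=\Delta/p$ and $S=2\Delta$ yields
$$t=\frac{\rho_a}{r}=\frac{\Delta-(p-b)(p-c)}{\Delta}=\frac{S-2(p-b)(p-c)}{S},\qquad 1-t=\frac{2(p-b)(p-c)}{S}.$$

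Finally, using the normalized barycentrics $A=(1,0,0)$ and $I=(a,b,c)/(2p)$, the point $(1-t)A+tI$ is proportional to
$$\left(\,2p\,\frac{1-t}{t}+a\;:\;b\;:\;c\,\right).$$
Substituting the expressions for $t$ and $1-t$ gives $2p(1-t)/t=4p(p-b)(p-c)/[\,S-2(p-b)(p-c)\,]$, and multiplying all three entries by $S-2(p-b)(p-c)$ produces exactly $bS-2b(p-b)(p-c)$ and $cS-2c(p-b)(p-c)$ in the last two slots, while the first becomes $a[\,S-2(p-b)(p-c)\,]+4p(p-b)(p-c)$. The identity $2p-a=b+c$ then collapses this first entry to $aS+2(p-b)(p-c)(b+c)$, matching the statement.

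The whole argument is essentially a single application of the parametrization of $AI$, with Theorem~\ref{thm:rho1} supplying the needed ratio. The only place that demands care is the final algebraic collapse of the first coordinate, but that reduction is purely mechanical, so I do not anticipate any genuine obstacle.
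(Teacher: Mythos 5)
Your proof is correct, but it follows a genuinely different route from the paper. The paper computes the areal coordinates of $D$ directly: since $\gamma_a$ touches $AB$ and $AC$, the distances from $D$ to those sides are both $\rho_a$, so $[DCA]:[DAB]=b\rho_a:c\rho_a$, and the first coordinate comes from the distance $y$ to $BC$ obtained from the area identity $ay+b\rho_a+c\rho_a=2\Delta$; substituting the formula of Theorem~\ref{thm:rho1} then yields the stated triple. You instead exploit the fact that $D$ lies on the bisector $AI$, so the last two coordinates are automatically proportional to $b:c$, and you pin down the remaining ratio by the affine parameter $t=AD/AI=\rho_a/r$ (both distances being the perpendicular distances to $AC$ divided by $\sin\frac A2$), again finishing with Theorem~\ref{thm:rho1} and $r=\Delta/p$. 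Your computation checks out, including the use of normalized barycentrics $I=(a,b,c)/(2p)$, which is the one place where care is needed for the affine combination $(1-t)A+tI$ to be legitimate, and the final simplification via $2p-a=b+c$. What each approach buys: the paper's area decomposition is entirely coordinate-free of normalization issues and produces all three coordinates in one stroke, while yours isolates the single geometric datum $AD/AI=\rho_a/r$ (equivalently $1-\tan\frac A2$ by Theorem~\ref{thm:rho}), which is conceptually tidy and makes the collinearity $A$, $D$, $I$ explicit, at the cost of a slightly longer algebraic rearrangement at the end.
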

\begin{proof}

Let $y$ be the distance between $D$ and the sideline $BC$. Summing the areas of triangles $DBC$, $DCA$ and $DAB$ we obtain
\begin{equation}\label{eqn:3}
   ay+b\rho_a +c\rho_a=2\Delta.
\end{equation}
Plugging (\ref{eqn:1}) into (\ref{eqn:3}) we get
\begin{gather} 
  ay+(b+c)\cdot \frac{\Delta-(p-b)(p-c)}{p}=S \nonumber \\
	ay+(b+c)\cdot \frac{S-2(p-b)(p-c)}{2p}=S \nonumber \\
	2pay+(b+c)S-2(p-b)(p-c)(b+c)=(a+b+c)S \nonumber \\
	2pay=2(p-b)(p-c)(b+c)+aS \nonumber \\
	ay=\frac{aS+2(p-b)(p-c)(b+c)}{2p} \label{eqn:4}
\end{gather}
By using (\ref{eqn:1}) and (\ref{eqn:4}), we obtain
\begin{align*}
   D&=\Delta DBC:\Delta DCA:DAB=ay:b\rho_a:c\rho_a\\
	  &=aS+2(p-b)(p-c)(b+c):bS-2b(p-b)(p-c):cS-2c(p-b)(p-c)
\end{align*}
which are the desired barycentric coordinates.	
\end{proof}

\goodbreak

\begin{theorem}\label{thm2a} 
The radical center of circles $\gamma_a$, $\gamma_b$ and $\gamma_c$ is the Gergonne point of $\triangle ABC$.
\end{theorem}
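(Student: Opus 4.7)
The plan is to show that the radical axis of each pair of circles in the triad coincides with a Gergonne cevian --- the line from a vertex to the incircle touch point on the opposite side. Since these three cevians concur at the Gergonne point, that point will automatically be the radical center of $\gamma_a$, $\gamma_b$, $\gamma_c$.

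To identify the radical axis of $\gamma_b$ and $\gamma_c$, I would exhibit two distinct points on it. The first is the vertex $A$. The circle $\gamma_b$ is tangent to side $AB$, so the power of $A$ with respect to $\gamma_b$ equals the square of the tangent length from $A$ along $AB$. Using Theorem \ref{thm:rho} together with the identity $\tan(B/2)=r/(p-b)$, the tangent length from $B$ equals $\rho_b\cot(B/2)=(p-b)-r$, so the tangent length from $A$ is $c-((p-b)-r)=(p-a)+r$. The same computation along side $AC$ for $\gamma_c$ yields tangent length $(p-a)+r$ as well. Hence $A$ has equal power with respect to $\gamma_b$ and $\gamma_c$.

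The second point comes from the common tangency of $\gamma_b$ and $\gamma_c$ with side $BC$. For any point $P$ on $BC$, the power with respect to a circle tangent to $BC$ at a point $T$ equals $PT^2$, so the radical axis meets $BC$ at the midpoint of the two tangent points. An analogous short calculation shows that $\gamma_b$ touches $BC$ at distance $(p-b)-r$ from $B$ while $\gamma_c$ touches $BC$ at distance $(p-c)-r$ from $C$, i.e. at distance $(p-b)+r$ from $B$; their midpoint therefore lies at distance exactly $p-b$ from $B$, which is precisely the incircle touch point $X$ on side $BC$. Thus the radical axis of $\gamma_b$ and $\gamma_c$ is the cevian $AX$. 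Applying the same argument to the other two pairs, the three radical axes are the three Gergonne cevians, and they concur at the Gergonne point as claimed.

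There is no substantive obstacle here: once Theorem \ref{thm:rho} is available and one observes that tangent lengths along the common tangents $AB$, $AC$, $BC$ encode all the needed power information, the argument reduces to two short computations, each producing the symmetric expression $(p-x)\pm r$. The only thing to notice beyond routine algebra is the elegant coincidence that the midpoint of the two tangency points on $BC$ is exactly the incircle's touch point on that side.
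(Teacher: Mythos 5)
Your argument is correct and is essentially the paper's own geometric proof of Theorem~\ref{thm2a}: both identify the radical axis of each pair, say $\gamma_b$ and $\gamma_c$, with the Gergonne cevian from the opposite vertex by exhibiting the vertex $A$ and the incircle contact point on $BC$ as two points of equal power. The only difference is cosmetic --- you derive the needed tangent lengths $(p-a)+r$ and the contact-point distances $(p-b)\pm r$ directly from Theorem~\ref{thm:rho}, whereas the paper quotes Theorems~\ref{thm:inradius} and~\ref{thm:tangents} for the same facts.
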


\begin{proof}
Using \textsc{Mathematica} and the package \texttt{baricentricas.m}\footnote{The package \texttt{baricentricas.m} written by F.J.G.Capitan can be freely downloaded from \url{http://garciacapitan.epizy.com/baricentricas/}}, it can be proved that 
\begin{itemize}
	\item[(a)] the radical axis of $\gamma_a$ and $\gamma_b$ is $(p-a)x-(p-b)y=0$;
	\item[(b)] the radical axis of $\gamma_b$ and $\gamma_c$ is $(p-b)y-(p-c)z=0$;
	\item[(c)] the radical axis of $\gamma_c$ and $\gamma_a$ is $(p-a)x-(p-c)z=0$.
\end{itemize}
An easy verification shows that these radical axes concur at 
\begin{equation*}
  G_e=(p-b)(p-c):(p-c)(p-a):(p-a)(p-b),  
\end{equation*}
which is the Gergonne point of $\triangle ABC$.
\end{proof}

\medskip
We can also give a purely geometric proof.
\medskip

\begin{proof}
Let the incircle touch the sides of $\triangle ABC$ at $Q_a$, $Q_b$, and $Q_c$ as shown in
Figure~\ref{fig:radicalCenter}.
From Theorem~\ref{thm:inradius}, $Q_aE_a=Q_aF_a=r$.
Thus, the tangents from $Q_a$ to $\gamma_b$ and $\gamma_c$ are equal.
Since $AD_c=AD_b$ and $D_cE_c=D_bF_b$ (Theorem~\ref{thm:tangents}), this means $AE_c=AF_b$.
Hence the tangents from $A$ to $\gamma_b$ and $\gamma_c$ are equal.
The radical axis of circles $\gamma_b$ and $\gamma_c$ is the locus of points
such that the lengths of the tangents to the two circles from that point are equal.
The radical axis of two circles is a straight line.
Therefore, the radical axis of circles $\gamma_b$ and $\gamma_c$ is $AQ_a$,
the Gergonne cevian from $A$.

\smallskip
Similarly, the radical axis of circles $\gamma_a$ and $\gamma_c$ is the Gergonne
cevian from $B$ and the radical axis of circles $\gamma_a$ and $\gamma_b$ is the Gergonne
cevian from $C$. Hence, the radical center of the triad of circles is the intersection point
of the three Gergonne cevians, namely, the Gergonne point of $\triangle ABC$.
\end{proof}

\begin{figure}[ht]
\centering
\includegraphics[scale=0.8]{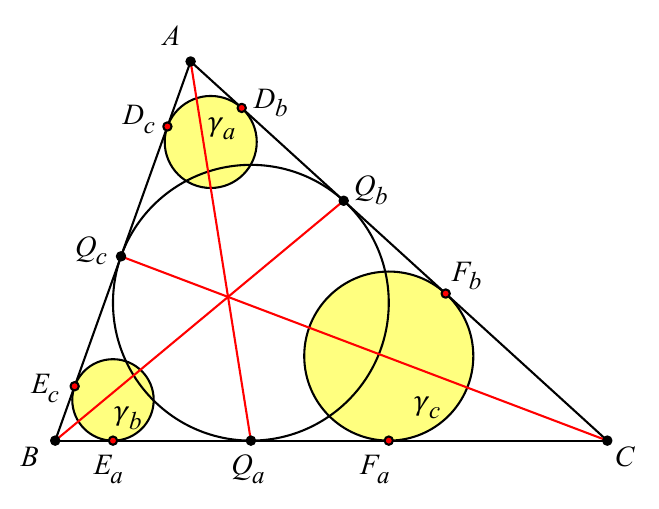}
\caption{}
\label{fig:radicalCenter}
\end{figure}

\goodbreak

\section{A concurrence at the Paasche point.}

\label{section:Paasche}

In \cite{ETC1123} the following result is stated.

\begin{theorem}\label{thm3} 
Suppose that $ABC$ is an acute triangle. Let $\gamma_a$ be the circle touching $CA$ and $AB$ from inside of $ABC$ and also externally tangent to the semicircle of diameter $BC$,
in point $T_a$. Define $T_b$ and $T_c$ cyclically (Figure~\ref{fig:3}). Then $ABC$ is perspective to $T_aT_bT_c$, and the perspector is $X_{1123}$, the Paasche point of $\triangle ABC$.

\begin{figure}[ht]
\centering
\includegraphics[scale=1.5]{figure6.pdf}
\caption{}
\label{fig:3}
\end{figure}
\end{theorem}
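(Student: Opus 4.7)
The plan is to pin down the barycentric coordinates of $T_a$ from the external-tangency condition, simplify them using Theorem~\ref{thm:rho1}, and then read off the concurrence by Ceva, finally matching the perspector with the ETC coordinates of $X_{1123}$.

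Since $\omega_a$ has center the midpoint $M_a$ of $BC$ and radius $a/2$, while $\gamma_a$ has center $D$ and radius $\rho_a$, external tangency places $T_a$ on segment $M_aD$ with $M_aT_a=a/2$ and $T_aD=\rho_a$. Rewriting this affine combination in projective barycentrics gives $T_a\propto 2\rho_a\,M_a+a\,D$, where $M_a=(0:1:1)$ and $D$ is supplied by Theorem~\ref{thm2}. After clearing the normalizing denominator $2pS$ from $D$, the $B$-coordinate of $T_a$ becomes
$$a\bigl[bS-2b(p-b)(p-c)\bigr]+2\rho_a\cdot pS \;=\; S(ab+2p\rho_a)-2ab(p-b)(p-c).$$
Substituting the identity $2p\rho_a=S-2(p-b)(p-c)$, which follows at once from Theorem~\ref{thm:rho1}, collapses this to $(S+ab)\bigl(S-2(p-b)(p-c)\bigr)=2p\rho_a(S+ab)$. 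An identical computation gives $2p\rho_a(S+ac)$ for the $C$-coordinate, so the cevian $AT_a$ meets $BC$ at $(0:S+ab:S+ac)$.

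Cyclically, $BT_b$ meets $CA$ at $(S+ab:0:S+bc)$ and $CT_c$ meets $AB$ at $(S+ca:S+cb:0)$. Ceva's criterion then reduces to the telescoping identity
$$\frac{S+ac}{S+ab}\cdot\frac{S+ab}{S+bc}\cdot\frac{S+cb}{S+ca}=1,$$
so the three cevians concur. Solving the cevian equations simultaneously expresses the common point as $\bigl(1/(S+bc):1/(S+ca):1/(S+ab)\bigr)$; invoking $S=bc\sin A=ca\sin B=ab\sin C$ recasts this as $\bigl(a/(1+\sin A):b/(1+\sin B):c/(1+\sin C)\bigr)$, matching the barycentrics listed for $X_{1123}$ in \cite{ETC1123}.

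The main obstacle is the algebraic collapse of the $B$-coordinate of $T_a$: once the substitution $2p\rho_a=S-2(p-b)(p-c)$ is made, one must recognize that $S^2+abS-2S(p-b)(p-c)-2ab(p-b)(p-c)$ regroups as $(S+ab)\bigl(S-2(p-b)(p-c)\bigr)$, producing the factor $S+ab$ that controls the final cevian ratio. Spotting this factorization is the crux; once it is in hand, the cyclic extension, the Ceva verification, and the identification with $X_{1123}$ are all mechanical.
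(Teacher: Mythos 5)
Your proposal is correct and follows essentially the same route as the paper: both use the barycentric coordinates of $D$ from Theorem~\ref{thm2}, locate $T_a$ on segment $DM_a$ via the external-tangency ratio $\rho_a:\tfrac a2$, simplify with $2p\rho_a=S-2(p-b)(p-c)$ to extract the factor pattern $S+ab$, $S+ac$, and identify the perspector $\bigl(1/(S+bc):1/(S+ca):1/(S+ab)\bigr)$, which is the paper's $(ab+S)(ac+S):(ab+S)(bc+S):(ac+S)(bc+S)$. The only cosmetic difference is that you verify concurrence by Ceva on the cevian traces rather than by intersecting the line equations, and you add the conversion to the $\sin$ form for the ETC match.
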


\begin{proof}
We use homogeneous barycentric coordinates with respect to the triangle $ABC$. Let $M_a$ be the midpoint of $BC$. 
The point $T_a$ divides the segment joining the centers of the circles $\gamma_a$ and $\omega_a$ in the ratio $\rho_a:\frac{a}{2}$. 
Using theorem \ref{thm2} we have that the sum of coordinates of $D$ is
\begin{align*}
     &aS+2(p-b)(p-c)(b+c)+bS-2b(p-b)(p-c)+cS-2c(p-b)(p-c)\\
    =&(a+b+c)S+2(p-b)(p-c)(b+c)-2(b+c)(p-b)(p-c)\\
		=&(a+b+c)S=2pS.
\end{align*}
Therefore, by writing the coordinates of $M_a$ in the form $M_a=0:pS:pS$, we get
\begin{equation*}
   T_a=\frac{a}{2}\cdot D+\rho_a\cdot M_a.
\end{equation*}
It follows that the first coordinate of $T_a=x_a:y_a:z_a$ is given by
\begin{align*}
   x_a&=\frac{a}{2}\left(aS+2(p-b)(p-c)(b+c)\right)+\rho_a\cdot 0\\
	    &=\frac{a}{2}\left(aS+2(p-b)(p-c)(b+c)\right).
\end{align*}
In a similar way we find that 
\begin{align*}
   y_a&=\frac{a}{2}\left(bS-2b(p-b)(p-c)\right)+\rho_a pS\\
	    &=\frac{1}{2}\left(S-2(p-b)(p-c)\right)(ab+S)
\end{align*}
and
\begin{align*}
   z_a&=\frac{a}{2}\left(cS-2c(p-b)(p-c)\right)+\rho_a pS\\
	    &=\frac{1}{2}\left(S-2(p-b)(p-c)\right)(ac+S).    
\end{align*}
Hence
\begin{align*}
   T_a=&a^2S+2a(p-b)(p-c)(b+c):\\
	     &\left(S-2(p-b)(p-c)\right)(ab+S):\\
			 &\left(S-2(p-b)(p-c)\right)(ac+S).    
\end{align*}
 The equation of line $AT_a$ is $z_ay+y_az=0$, i.e.
\begin{equation*}
   AT_a: \quad (ac+S)y-(ab+S)z=0.
\end{equation*}
The cyclic substitution $a\to b$, $b\to c$, $c\to a$ gives
\begin{align*}
   &BT_b: \quad (bc+S)x-(ab+S)z=0,\\
	 &CT_c: \quad (bc+S)x-(ac+S)y=0.
\end{align*}
A direct verification shows that $AT_a$, $BT_b$, and $CT_c$ concur at the Paasche point 
\begin{equation*}
 X_{1123}=(a b + S) (a c + S):(a b + S) (b c + S):(a c + S) (b c + S). \qedhere
\end{equation*}
\end{proof}

\section{Apollonius circles of $\gamma_a$, $\gamma_b$, $\gamma_c$}

In order to find the radii of the inner and outer Apollonius circles tangent to $\gamma_a$, $\gamma_b$,
and $\gamma_c$
we will use the method explained in \cite{Ste} and some preliminary lemmas. The more complicated calculations are 
performed with \textsc{Mathematica}.

\begin{lemma}\label{lemma1}
If $u=EF$, $v=DF$, $w=DE$ are the distances between the centers of the circles $\gamma_a$, $\gamma_b$, and $\gamma_c$,
we have
\begin{align*}
	u^2&=\frac{a (b+c-a) \left(a^2+a b+a c-2 b^2+4 b c-2 c^2\right)}{(a+b+c)^2},\\
  v^2&=\frac{b (a-b+c) \left(-2 a^2+a b+4 a c+b^2+b c-2 c^2\right)}{(a+b+c)^2},\\
	w^2&=\frac{c (a+b-c) \left(-2 a^2+4 a b+a c-2 b^2+b c+c^2\right)}{(a+b+c)^2}.	
\end{align*}
\end{lemma}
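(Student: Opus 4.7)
The plan is to exploit Theorem~\ref{thm:tangents}, which immediately reduces the computation of the inter-center distances to a short algebraic identity. Recall that for two disjoint circles of radii $r_1$ and $r_2$ with center-to-center distance $d$, the common external tangent has length $\sqrt{d^2-(r_1-r_2)^2}$. Since Theorem~\ref{thm:tangents} asserts that the common external tangent between any two members of the triad has length $2r$, applying this to $\gamma_b$ and $\gamma_c$ gives
$$u^2 = EF^2 = 4r^2 + (\rho_b-\rho_c)^2,$$
together with the cyclic analogues $v^2 = 4r^2 + (\rho_c-\rho_a)^2$ and $w^2 = 4r^2 + (\rho_a-\rho_b)^2$.

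Next, I would use Theorem~\ref{thm:rho1} to evaluate the differences. From $\rho_b = [\Delta-(p-c)(p-a)]/p$ and $\rho_c = [\Delta-(p-a)(p-b)]/p$ one obtains at once
$$\rho_b - \rho_c = \frac{(p-a)\bigl[(p-b)-(p-c)\bigr]}{p} = \frac{(p-a)(c-b)}{p},$$
so $(\rho_b-\rho_c)^2 = (p-a)^2(b-c)^2/p^2$. Combined with Heron's formula $r^2 = (p-a)(p-b)(p-c)/p$, this gives
$$u^2 = \frac{(p-a)\bigl[\,4p(p-b)(p-c) + (p-a)(b-c)^2\,\bigr]}{p^2}.$$

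The last step is to recast this in terms of $a,b,c$ only. Substituting $p=(a+b+c)/2$, $p-a=(b+c-a)/2$, and the analogous expressions for $p-b$, $p-c$, the prefactor becomes $2(b+c-a)/(a+b+c)^2$, and one is reduced to verifying the polynomial identity
$$(a+b+c)(a-b+c)(a+b-c) + (b+c-a)(b-c)^2 = a\bigl(a^2+ab+ac-2b^2+4bc-2c^2\bigr).$$
Expanding the left side (for instance via $(a-b+c)(a+b-c) = a^2-(b-c)^2$) and collecting terms yields the right side after cancellation. The formulas for $v^2$ and $w^2$ then follow from the cyclic symmetry $(a,b,c)\to(b,c,a)\to(c,a,b)$.

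The only real obstacle is the final polynomial expansion, which is elementary but a bit tedious by hand; the paper's use of \textsc{Mathematica} makes this step trivial in practice. The genuine geometric content of the lemma is entirely packaged in the two cited results, Theorem~\ref{thm:tangents} and Theorem~\ref{thm:rho1}; no further idea is required.
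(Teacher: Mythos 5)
Your argument is correct and essentially identical to the paper's: the authors also obtain $u^2=(2r)^2+(\rho_c-\rho_b)^2$ (via the Pythagorean theorem on the trapezoid with the $2r$ tangent segment from Theorem~\ref{thm:tangents}), then substitute the formulas of Theorem~\ref{thm:rho1} together with $r=\Delta/p$ and Heron's formula and simplify to the stated expression. Your explicit factorization using $(a-b+c)(a+b-c)=a^2-(b-c)^2$ checks out, so no gap remains.
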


\begin{proof}
From Theorem \ref{thm:rho1}, we have
\begin{equation*}
   \rho_a=\frac{\Delta-(p-b) (p-c)}{p},
\end{equation*}
and similarly
\begin{equation*}
   \rho_b=\frac{\Delta-(p-a)(p-c)}{p}, \qquad \rho_c=\frac{\Delta-(p-a)(p-b)}{p}.
\end{equation*}

\begin{figure}[ht]
\centering
\includegraphics[scale=0.3]{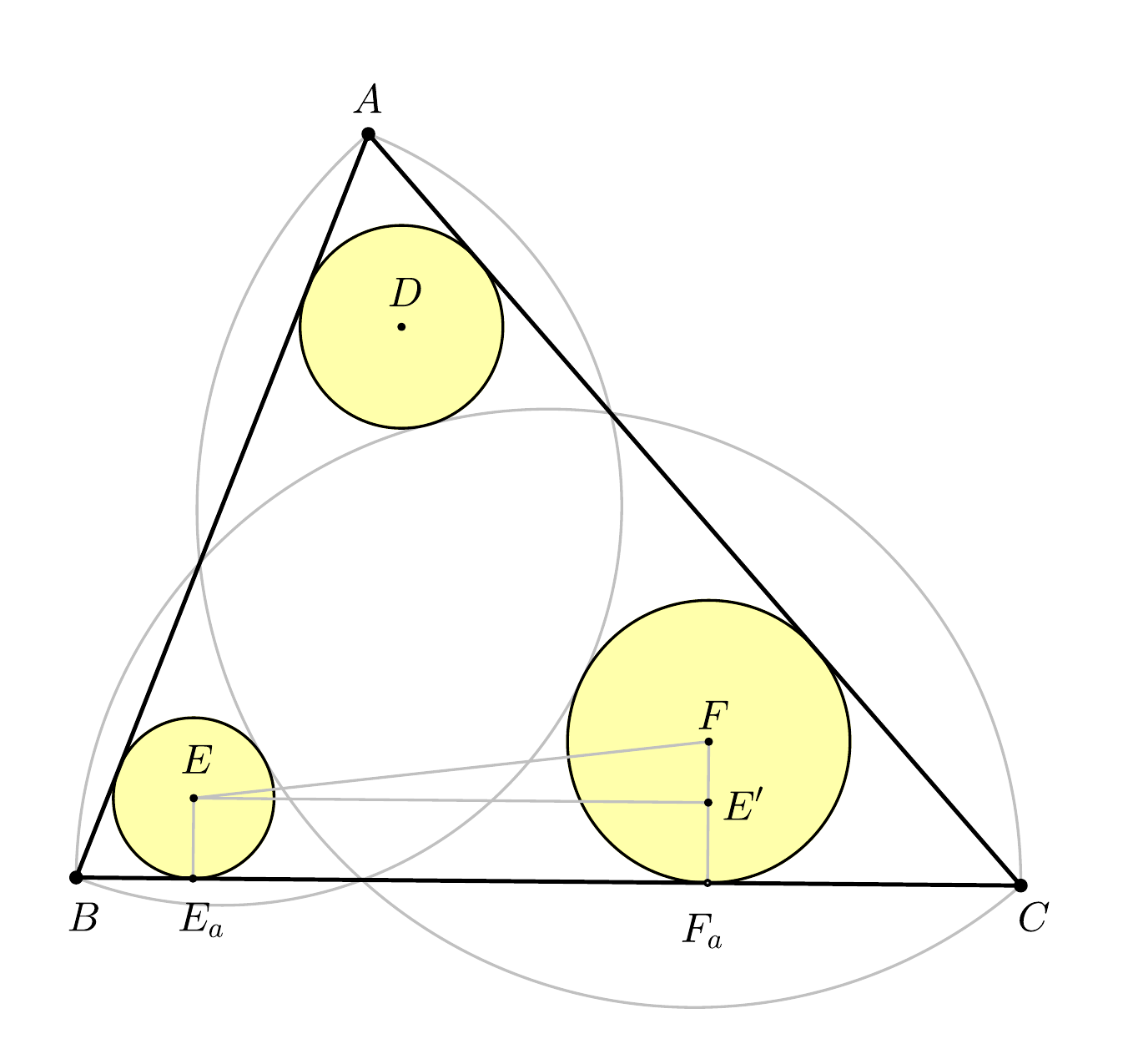}
\caption{}
\label{fig:4}
\end{figure}

Assume, without loss of generality, that $\rho_b<\rho_c$. Let $E'$ be the foot of the perpendicular from $E$ to $FF_a$. 
Applying the Pythagorean Theorem to triangle $EFE'$ (see figure \ref{fig:4}), taking into account that $EE'=E_aF_a=2r$ and $FE'=\rho_c-\rho_b$, we obtain
\begin{align*}
	 u^2&=EF^2=E_aF_a^2+(FF_a-EE_a)^2=(2r)^2+(\rho_c-\rho_b)^2\\
	    &=4r^2+\left(\frac{(p-a)(p-c)-(p-a)(p-b)}{p}\right)^2\\
			&=4\cdot\frac{\Delta^2}{p^2}+\frac{(p-a)^2(b-c)^2}{p^2}\\
			&=\frac{4p(p-a)(p-b)(p-c)+(p-a)^2(b-c)^2}{p^2}\\
			&=\frac{a(b+c-a) \left(a^2+a b+a c-2 b^2+4 b c-2 c^2\right)}{(a+b+c)^2}.
\end{align*}
The formulas relating to $v^2$ and $w^2$ can be proved in a similar way.
\end{proof}

\begin{lemma}\label{lemma2}
If $\theta$, $\varphi$, $\psi$ are three positive real numbers such that $\theta+\varphi+\psi=360^\circ$, then we have
\begin{equation*}
   \cos^2 \theta+\cos^2 \varphi+\cos^2 \psi-2\cos \theta\cos \varphi\cos \psi=1.
\end{equation*} 
\end{lemma}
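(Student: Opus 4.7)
The plan is to reduce Lemma~\ref{lemma2} to the classical triangle identity
\[
\cos^{2}A+\cos^{2}B+\cos^{2}C+2\cos A\cos B\cos C=1,
\]
which holds whenever $A+B+C=180^\circ$. Concretely, I would set $A=180^\circ-\theta$, $B=180^\circ-\varphi$, $C=180^\circ-\psi$. Then $A+B+C=540^\circ-360^\circ=180^\circ$, so the triangle identity applies. Since $\cos A=-\cos\theta$, $\cos B=-\cos\varphi$, $\cos C=-\cos\psi$, the squared terms are unchanged and the cross term flips sign once (three minus signs combine to one), producing exactly the desired identity.

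If the author prefers a self-contained derivation, the alternative is a short direct computation. From $\theta+\varphi+\psi=360^\circ$ one obtains $\cos\psi=\cos(\theta+\varphi)=\cos\theta\cos\varphi-\sin\theta\sin\varphi$. Squaring gives
\[
\cos^{2}\psi=\cos^{2}\theta\cos^{2}\varphi-2\cos\theta\cos\varphi\sin\theta\sin\varphi+\sin^{2}\theta\sin^{2}\varphi,
\]
while $2\cos\theta\cos\varphi\cos\psi=2\cos^{2}\theta\cos^{2}\varphi-2\cos\theta\cos\varphi\sin\theta\sin\varphi$. Subtracting, the mixed $\sin\sin\cos\cos$ terms cancel and one is left with $\cos^{2}\theta+\cos^{2}\varphi-\cos^{2}\theta\cos^{2}\varphi+\sin^{2}\theta\sin^{2}\varphi$. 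Expanding $\sin^{2}\theta\sin^{2}\varphi=(1-\cos^{2}\theta)(1-\cos^{2}\varphi)$ collapses everything to $1$.

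There is no real obstacle here; the whole content is a one-variable elimination via $\psi=360^\circ-\theta-\varphi$, followed by the Pythagorean identity. The only thing to be careful about is sign bookkeeping when converting between $\cos(\theta+\varphi)$, $\cos(360^\circ-\theta-\varphi)$, and $-\cos(180^\circ-\theta-\varphi)$; the triangle-identity reformulation makes this bookkeeping cleanest and, in my view, is the version most worth writing out in the paper, because it also explains \emph{why} the sign in front of $2\cos\theta\cos\varphi\cos\psi$ is minus rather than plus.
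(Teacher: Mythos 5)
Your proposal is correct. The ``self-contained'' alternative you sketch --- eliminating $\psi$ via $\cos\psi=\cos(360^\circ-\theta-\varphi)=\cos(\theta+\varphi)$, expanding with the addition formula, and collapsing with $\sin^2+\cos^2=1$ --- is essentially the paper's own proof, step for step. Your preferred route, substituting $A=180^\circ-\theta$, $B=180^\circ-\varphi$, $C=180^\circ-\psi$ and invoking the identity $\cos^2A+\cos^2B+\cos^2C+2\cos A\cos B\cos C=1$ for $A+B+C=180^\circ$, is also sound: the sign bookkeeping is right (the three sign flips affect only the product term), and one should note that the classical identity must be used for arbitrary reals summing to $180^\circ$, not merely for angles of an actual triangle, since here $180^\circ-\theta$ may be negative or exceed $180^\circ$; fortunately the identity does hold in that generality. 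What the reduction buys is a conceptual explanation of why the product term carries a minus sign; what it costs is that the cited identity is itself normally proved by exactly the elimination-plus-Pythagoras computation the paper performs, so unless that identity is taken as known, the reduction does not shorten the argument.
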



\begin{proof} Using the addition formulas we have
\begin{align*}
   &\cos^2\theta+\cos^2\varphi+\cos^2\psi-2\cos\theta\cos\varphi\cos\psi\\
	=&\cos^2\theta+\cos^2\varphi+\cos^2(360^\circ-\theta-\varphi)-2\cos\theta\cos\varphi\cos(360^\circ-\theta-\varphi)\\
	=&\cos^2\theta+\cos^2\varphi+\cos^2(\theta+\varphi)-2\cos\theta\cos\varphi\cos(\theta+\varphi)\\
	=&\cos^2\theta+\cos^2\varphi+\left(\cos\theta\cos\varphi-\sin\theta\sin\varphi\right)^2-2\cos\theta\cos\varphi\left(\cos\theta\cos\varphi-\sin\theta\sin\varphi\right)\\
	=&\cos^2\theta+\cos^2\varphi+\cos^2\theta\cos^2\varphi+\sin^2\theta\sin^2\varphi-2\cos^2\theta\cos^2\varphi\\
	=&\cos^2\theta+\cos^2\varphi+\sin^2\theta\sin^2\varphi-\cos^2\theta\cos^2\varphi\\
	=&\cos^2\theta(1-\cos^2\varphi)+\cos^2\varphi+\sin^2\theta\sin^2\varphi\\
	=&\sin^2\varphi(\cos^2\theta+\sin^2\theta)+\cos^2\varphi\\
	=&\sin^2\varphi+\cos^2\varphi=1.\qedhere
\end{align*}
\end{proof}

\begin{theorem}\label{thm4} 
Let $\rho_i$ be the radius of the inner Apollonius circle externally tangent to $\gamma_a$, $\gamma_b$,
and $\gamma_c$ (see figure \ref{fig:14}).
Then
\begin{equation*}
   \rho_i^2=\rho_a^2+\rho_b^2+\rho_c^2.
\end{equation*}

\begin{figure}[ht]
\centering
\includegraphics[scale=1]{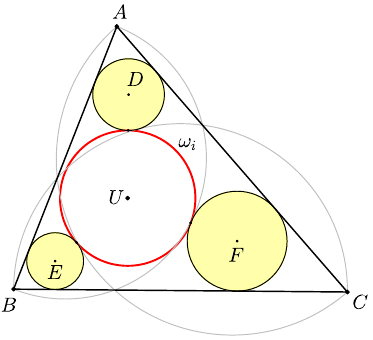}
\caption{}
\label{fig:14}
\end{figure}

\end{theorem}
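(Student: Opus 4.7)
The plan is to follow the standard method for Apollonius's problem suggested by the two preparatory lemmas. Let $O_i$ denote the center of the sought inner Apollonius circle. Since it is externally tangent to all three circles of the triad,
\[
|O_iD| = \rho_i+\rho_a,\qquad |O_iE| = \rho_i+\rho_b,\qquad |O_iF| = \rho_i+\rho_c.
\]
Let $\theta=\angle EO_iF$, $\varphi=\angle FO_iD$, $\psi=\angle DO_iE$. Because $O_i$ lies strictly inside triangle $DEF$, these three angles sum to $360^\circ$, so Lemma~\ref{lemma2} applies:
\[
\cos^2\theta+\cos^2\varphi+\cos^2\psi-2\cos\theta\cos\varphi\cos\psi=1.
\]

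Next I would use the law of cosines in triangles $EO_iF$, $FO_iD$, $DO_iE$ to rewrite each cosine as a rational function of $\rho_i,\rho_a,\rho_b,\rho_c$ and the sides $u,v,w$ of triangle $DEF$; for instance
\[
\cos\theta=\frac{(\rho_i+\rho_b)^2+(\rho_i+\rho_c)^2-u^2}{2(\rho_i+\rho_b)(\rho_i+\rho_c)},
\]
with cyclic formulas for $\cos\varphi$ and $\cos\psi$. Substituting into the Lemma~\ref{lemma2} identity and clearing the common denominator $4(\rho_i+\rho_a)^2(\rho_i+\rho_b)^2(\rho_i+\rho_c)^2$ produces a polynomial equation in $\rho_i$ with coefficients that depend only on $a,b,c$, once the values of $u^2,v^2,w^2$ from Lemma~\ref{lemma1} and the formulas for $\rho_a,\rho_b,\rho_c$ from Theorem~\ref{thm:rho1} are inserted. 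This is the equation whose roots are the radii of the Apollonius circles (both inner and outer, with different signs of tangency), so the root selection will pin down $\rho_i$.

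The main obstacle is purely algebraic bookkeeping: the polynomial that arises is of high total degree in $a,b,c$, and it must be shown to factor in such a way that one factor vanishes precisely when $\rho_i^2=\rho_a^2+\rho_b^2+\rho_c^2$, with the other factors corresponding to the outer Apollonius radius and to spurious roots introduced by squaring. I would delegate the expansion, substitution, and factoring to \textsc{Mathematica} (as the authors already do elsewhere in the paper) and then verify that substituting $\rho_i^2-\rho_a^2-\rho_b^2-\rho_c^2=0$ into the polynomial annihilates it, identifying that factor as the one relevant to the inner Apollonius circle by, say, checking a single numerical example (such as an equilateral triangle, where $\rho_i=\rho_a\sqrt3$ by symmetry). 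Once the correct factor has been isolated and verified, the identity $\rho_i^2=\rho_a^2+\rho_b^2+\rho_c^2$ follows.
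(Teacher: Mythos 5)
Your plan is essentially the paper's own proof: the authors likewise take the center of the inner circle, use the $360^\circ$ angle identity of Lemma~\ref{lemma2} together with the law of cosines and the distances from Lemma~\ref{lemma1}, let \textsc{Mathematica} reduce everything to a factored polynomial equation in $x=\rho_i$, and discard the negative root, the only differences being their cleaner substitution $t=\sin^2(\varphi/2)$ and that they finish by computing the positive root explicitly as $r(4R+r-p)/p$ and invoking equation~(\ref{eq:1}). One small caution for your verification step: the resulting quadratic is (up to a constant) $(x-\rho_i)(x+\rho_o)$, not $x^2-(\rho_a^2+\rho_b^2+\rho_c^2)$, so you should check that the positive root equals $+\sqrt{\rho_a^2+\rho_b^2+\rho_c^2}$ rather than expecting the polynomial to be annihilated by $\rho_i^2-\rho_a^2-\rho_b^2-\rho_c^2$ as a factor.
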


\begin{proof}
Let $U$ be the center of the inner Apollonius circle and let $x=\rho_i$. 
Let us consider the angles $\varphi_a=\angle EUF$, $\varphi_b=\angle FUD$,
and $\varphi_c=\angle DUE$.
 
Since $\varphi_a+\varphi_b+\varphi_c=360^\circ$ by Lemma \ref{lemma2} we have
\begin{equation}\label{eq:13}
   \cos^2 \varphi_a+\cos^2 \varphi_b+\cos^2 \varphi_c-2\cos \varphi_a\cos \varphi_b\cos \varphi_c=1.
\end{equation}
If we substitute
\begin{equation}\label{eq:14}
   t_a=\sin^2 \frac{\varphi_a}{2}, \qquad t_b=\sin^2 \frac{\varphi_b}{2}, \qquad t_c=\sin^2 \frac{\varphi_c}{2}
\end{equation}  
in (\ref{eq:13}), we obtain
\begin{equation}\label{eq:15}
   t_a^2+t_b^2+t_c^2-2\left(t_at_b+t_bt_c+t_ct_a\right)+4t_at_2t_c=0.
\end{equation}
Since $UD=x+\rho_a$, $UE=x+\rho_b$, $UF=x+\rho_c$, the Law of Cosines yields
\begin{gather}
   \cos \varphi_a=\frac{\left(x+\rho_b\right)^2+\left(x+\rho_c\right)^2-u^2}{2(x+\rho_b)(x+\rho_c)} \quad \Rightarrow\nonumber\\
	 \quad t_a=\frac{1-\cos \varphi_1}{2}=\frac{u^2-\left(\rho_b-\rho_c\right)^2}{4(x+\rho_b)(x+\rho_c)},\label{eq:16}
\end{gather} 
and analogously
\begin{equation}\label{eq:17}
   t_b=\frac{v^2-\left(\rho_c-\rho_a\right)^2}{4(x+\rho_c)(x+\rho_a)}, \qquad t_c=\frac{w^2-\left(\rho_a-\rho_b\right)^2}{4(x+\rho_a)(x+\rho_b)}.
\end{equation}
Plugging (\ref{eq:16}) and (\ref{eq:17}) in (\ref{eq:15}) and using Lemma \ref{lemma1}, after a straightforward calculation, we get an equation of the form $f(x)g(x)=0$, 
where
\begin{equation}\label{eq:18}
   f(x)=6(a+b+c)x+2ab+2bc+2ac-a^2-b^2-c^2+12\Delta
\end{equation}
and 
\begin{equation}\label{eq:19}
   g(x)=2(a+b+c)x+a^2+b^2+c^2-2ab-2ac-2bc+4\Delta.  
\end{equation}
The root of (\ref{eq:18}) is
\begin{equation*}
x=\frac{a^2+b^2+c^2-2ab-2bc-2ca-12\Delta}{6(a+b+c)}=-\frac{2r(4R+r)+6\Delta}{6p}<0.   
\end{equation*}
The root of (\ref{eq:19}) is
\begin{equation*}
x=\frac{-a^2-b^2-c^2+2ba+2bc+2ca-4\Delta}{2 (a + b + c)}=\frac{r (4 R+r-p)}{p}>0.
\end{equation*} 
Therefore, discarding the negative root, we have $\rho_i=\frac{r(4R+r-p)}{p}$. 
Hence, taking into account equation~(\ref{eq:1}), we get 
\begin{equation*}
 \rho_i^2=\rho_a^2+\rho_b^2+\rho_c^2.\qedhere 
\end{equation*}
\end{proof}

\begin{corollary}
In Figure~\ref{fig:color} showing the triad of circles associated with $\triangle ABC$ and the inner
Apollonius circle externally tangent to each circle in the triad, we have that the sum of the yellow areas
is equal to the green area.
\end{corollary}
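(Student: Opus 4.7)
The plan is to recognize that this corollary is an immediate reinterpretation of Theorem~\ref{thm4} in terms of areas. By that theorem we have $\rho_i^2=\rho_a^2+\rho_b^2+\rho_c^2$, and multiplying both sides by $\pi$ gives
\[
\pi\rho_i^2=\pi\rho_a^2+\pi\rho_b^2+\pi\rho_c^2,
\]
which says exactly that the area of the inner Apollonius disk equals the combined areas of the three triad disks $\gamma_a$, $\gamma_b$, $\gamma_c$.

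To finish, I would identify the coloring in Figure~\ref{fig:color} using the convention established in Corollary~\ref{thm:annulus} (where two color-coded regions are declared equal in area precisely because of a squared-radius identity, namely $r^2+r^2=(r\sqrt{2})^2$). The natural reading is that the three yellow regions are the disks $\gamma_a$, $\gamma_b$, $\gamma_c$, while the single green region is the inner Apollonius disk. Under this identification the corollary is merely the displayed identity above.

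The only point I would pause over is whether any of the colored regions are partial rather than full disks, which could in principle require accounting for overlap. However, since the inner Apollonius circle is externally tangent to each of $\gamma_a,\gamma_b,\gamma_c$, these four disks have pairwise disjoint interiors (meeting only at the three tangency points), so no inclusion-exclusion correction is needed and the areas literally add. Consequently there is no genuine obstacle: once Theorem~\ref{thm4} is in hand, the corollary is obtained by multiplying by $\pi$ and reading off what the colors denote.
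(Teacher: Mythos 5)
Your argument is exactly the paper's intended one: the corollary is stated without proof as an immediate consequence of Theorem~\ref{thm4}, since multiplying $\rho_i^2=\rho_a^2+\rho_b^2+\rho_c^2$ by $\pi$ identifies the green disk's area with the total area of the three yellow triad disks. Your added remark about tangency and non-overlap is harmless but not needed, as the claim compares sums of individual disk areas rather than areas of unions.
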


\begin{figure}[ht]
\centering
\includegraphics[scale=0.56]{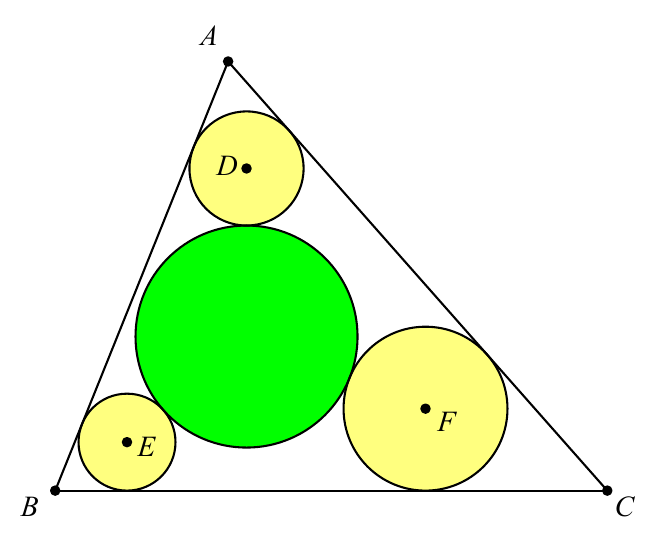}
\caption{yellow area = green area}
\label{fig:color}
\end{figure}

\begin{theorem}\label{thm5} 
Let $\rho_o$ be the radius of the outer Apollonius circle internally tangent to $\gamma_a$, $\gamma_b$, $\gamma_c$ (see Figure~\ref{fig:6}). We have
\begin{equation*}
   \rho_o=\frac{2}{3}\left(\rho_a+\rho_b+\rho_c\right)+\sqrt{\rho_a^2+\rho_b^2+\rho_c^2}.\\
\end{equation*}

\begin{figure}[ht]
\centering
\includegraphics[scale=1.1]{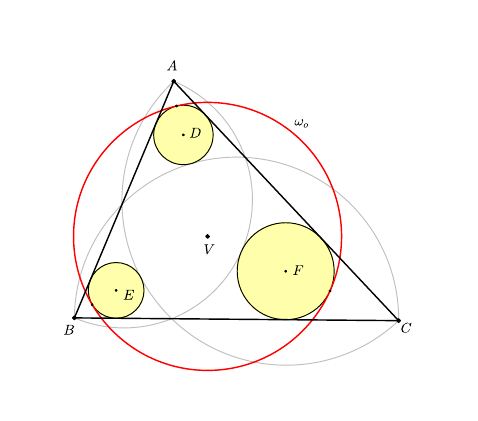}
\caption{}
\label{fig:6}
\end{figure}
\end{theorem}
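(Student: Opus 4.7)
The plan is to follow the template of the proof of Theorem~\ref{thm4} essentially verbatim, modifying only the distances from the prospective center $U$ of the outer Apollonius circle to the triad centers $D$, $E$, $F$. Since the outer circle now contains all three triad circles (internal tangency), the correct expressions are $UD=x-\rho_a$, $UE=x-\rho_b$, $UF=x-\rho_c$ where $x=\rho_o$, rather than $x+\rho_a$, etc.

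First, I would keep the angles $\varphi_a=\angle EUF$, $\varphi_b=\angle FUD$, $\varphi_c=\angle DUE$ and observe that they still satisfy $\varphi_a+\varphi_b+\varphi_c=360^\circ$ because $U$ lies inside $\triangle DEF$ in this configuration. Lemma~\ref{lemma2} and the half-angle substitution $t_a=\sin^2(\varphi_a/2)$ therefore yield equation~(\ref{eq:15}) unchanged. The Law of Cosines now produces
$$t_a=\frac{u^2-(\rho_b-\rho_c)^2}{4(x-\rho_b)(x-\rho_c)},$$
with analogous expressions for $t_b$ and $t_c$. I would plug these into~(\ref{eq:15}) together with the values of $u^2,v^2,w^2$ from Lemma~\ref{lemma1} and clear denominators; as in Theorem~\ref{thm4}, the resulting polynomial in $x$ should factor into two factors whose roots correspond to the two Apollonius radii, and after discarding the root that does not describe the outer circle (on sign grounds) one extracts a closed form for $\rho_o$ in terms of $r$, $R$, and $p$. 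I expect to obtain $\rho_o=\dfrac{r(3p+4R+r)}{3p}$.

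With this closed form in hand, the remaining step is to match it against the formula claimed in the theorem. Using Theorem~\ref{thm:rho} with $\tan\frac{A}{2}=\frac{r}{p-a}$ and the standard identity $ab+bc+ca=p^2+4Rr+r^2$, one derives $\tan\frac{A}{2}+\tan\frac{B}{2}+\tan\frac{C}{2}=\frac{4R+r}{p}$, whence
$$\rho_a+\rho_b+\rho_c=\frac{r(3p-4R-r)}{p}.$$
Combined with Theorem~\ref{thm:Suppa}, which gives $\sqrt{\rho_a^2+\rho_b^2+\rho_c^2}=\frac{r|p-4R-r|}{p}$, and the fact that $4R+r>p$ for an acute triangle (a condition already guaranteed by $\rho_i>0$ in Theorem~\ref{thm4}), a direct algebraic check shows
$$\tfrac{2}{3}(\rho_a+\rho_b+\rho_c)+\sqrt{\rho_a^2+\rho_b^2+\rho_c^2}=\frac{r(3p+4R+r)}{3p},$$
which coincides with the computed $\rho_o$.

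The main obstacle I expect is the bookkeeping in the polynomial step: the sign flip on the $\rho$'s inside the denominators changes the polynomial, and one must verify that it still factors nicely and single out the factor whose positive root is genuinely $\rho_o$ rather than a spurious Apollonius radius (there are in principle up to eight tangent circles to $\gamma_a,\gamma_b,\gamma_c$). A geometric sanity check against the equilateral case, where $\rho_o$ can be computed directly by symmetry, should be used to confirm the sign choice before committing to the algebraic identification with the target expression.
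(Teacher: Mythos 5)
Your proposal is correct and follows essentially the same route as the paper, whose proof of this theorem consists precisely of repeating the argument of Theorem~\ref{thm4} with $UD=x-\rho_a$, $UE=x-\rho_b$, $UF=x-\rho_c$; your closed form $\rho_o=\frac{r(3p+4R+r)}{3p}$ is indeed what that computation yields (it is the negative of the discarded root of equation~(\ref{eq:18})), and your algebraic identification with $\frac{2}{3}(\rho_a+\rho_b+\rho_c)+\sqrt{\rho_a^2+\rho_b^2+\rho_c^2}$ checks out.
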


\begin{proof}
The proof is similar to that of Theorem \ref{thm4}.
\end{proof}

Combining Theorem \ref{thm:Sangaku}, Theorem~\ref{thm4}, and Theorem~\ref{thm5}, we get the following nice results.

\begin{corollary}\label{cor2} The inradius $r$ and the radii $\rho_i$, $\rho_o$ of the Apollonius circles satisfy the relation
\begin{equation*}
   3\rho_o=\rho_i+4r.
\end{equation*}
\end{corollary}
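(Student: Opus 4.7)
The plan is to combine the three identities that the corollary explicitly cites, and to do so in a way that makes all dependence on the individual $\rho_a,\rho_b,\rho_c$ cancel cleanly. The key observation is that Theorem~\ref{thm4} gives $\sqrt{\rho_a^2+\rho_b^2+\rho_c^2}=\rho_i$ (since $\rho_i>0$), so the awkward square root appearing in both Theorem~\ref{thm:Sangaku} and Theorem~\ref{thm5} can be replaced throughout by $\rho_i$.

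Concretely, I would first rewrite Theorem~\ref{thm:Sangaku} as
\begin{equation*}
2r=\rho_a+\rho_b+\rho_c+\rho_i,
\end{equation*}
and Theorem~\ref{thm5} as
\begin{equation*}
\rho_o=\tfrac{2}{3}(\rho_a+\rho_b+\rho_c)+\rho_i,
\end{equation*}
i.e.\ $3\rho_o=2(\rho_a+\rho_b+\rho_c)+3\rho_i$. Then I would multiply the first relation by $2$ to get $4r=2(\rho_a+\rho_b+\rho_c)+2\rho_i$, subtract it from $3\rho_o$, and observe that the $2(\rho_a+\rho_b+\rho_c)$ terms cancel, leaving $3\rho_o-4r=\rho_i$, which is exactly the claimed identity.

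The argument is a one-line linear combination of the three cited theorems, so there is no serious obstacle; the only thing worth a moment of care is recording that $\rho_i\ge 0$ (so that Theorem~\ref{thm4} really lets us replace $\sqrt{\rho_a^2+\rho_b^2+\rho_c^2}$ by $\rho_i$ and not by its negative). Because everything else is forced, I would keep the write-up very short: state the three substitutions, give the two-line arithmetic, and conclude.
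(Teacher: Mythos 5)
Your proposal is correct and is exactly the paper's intended argument: the paper derives the corollary by "combining" Theorem~\ref{thm:Sangaku}, Theorem~\ref{thm4}, and Theorem~\ref{thm5}, which is precisely your substitution of $\rho_i$ for $\sqrt{\rho_a^2+\rho_b^2+\rho_c^2}$ followed by the linear elimination of $\rho_a+\rho_b+\rho_c$. Nothing further is needed.
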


\begin{corollary}\label{cor3} The radii $\rho_i$, $\rho_o$ of the Apollonius circles satisfy the relation
\begin{equation*}
   3\rho_o=2(\rho_a+\rho_b+\rho_c)+3\rho_i.
\end{equation*}
\end{corollary}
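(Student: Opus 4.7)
The plan is to derive this corollary as a direct algebraic consequence of Theorem~\ref{thm4} and Theorem~\ref{thm5}, both of which have already been established. No new geometry is needed; the corollary is essentially a restatement in which the square-root expression appearing in Theorem~\ref{thm5} is interpreted via Theorem~\ref{thm4}.

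Concretely, I would begin by quoting the identity
\begin{equation*}
\rho_o=\tfrac{2}{3}(\rho_a+\rho_b+\rho_c)+\sqrt{\rho_a^2+\rho_b^2+\rho_c^2}
\end{equation*}
from Theorem~\ref{thm5}. Next, I would invoke Theorem~\ref{thm4}, which states $\rho_i^2=\rho_a^2+\rho_b^2+\rho_c^2$; since $\rho_i>0$, this gives $\sqrt{\rho_a^2+\rho_b^2+\rho_c^2}=\rho_i$. Substituting into the formula for $\rho_o$ yields $\rho_o=\tfrac{2}{3}(\rho_a+\rho_b+\rho_c)+\rho_i$, and multiplying through by $3$ gives exactly
\begin{equation*}
3\rho_o=2(\rho_a+\rho_b+\rho_c)+3\rho_i,
\end{equation*}
which is the claim.

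There is no real obstacle here — all the difficulty is absorbed in the proofs of Theorems~\ref{thm4} and~\ref{thm5}. The only thing worth double-checking is the sign convention: that $\rho_i$ is declared positive (it is the radius of the inner Apollonius circle), so taking the positive square root is legitimate. It is also worth noting, as a sanity check, that Corollary~\ref{cor2} can be recovered by combining the present result with Theorem~\ref{thm:Sangaku}: eliminating $\rho_a+\rho_b+\rho_c$ between $3\rho_o=2(\rho_a+\rho_b+\rho_c)+3\rho_i$ and $2r=(\rho_a+\rho_b+\rho_c)+\rho_i$ immediately gives $3\rho_o=4r+\rho_i$, confirming internal consistency.
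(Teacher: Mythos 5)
Your proof is correct and follows exactly the route the paper intends: Corollary~\ref{cor3} is obtained by substituting $\rho_i=\sqrt{\rho_a^2+\rho_b^2+\rho_c^2}$ from Theorem~\ref{thm4} into the formula for $\rho_o$ in Theorem~\ref{thm5} and clearing the factor of $3$. Your remark that $\rho_i>0$ justifies taking the positive root, and your consistency check against Corollary~\ref{cor2}, are fine but not needed beyond what the paper already does.
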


\textbf{Remark.} The centers $U$ and $V$ of the inner and outer Apollonius circles of $\gamma_a$, $\gamma_b$, and $\gamma_c$ are known ETC centers, 
namely $U=X(52805)$ and $V=X(52806)$.


\void{
\begin{theorem} \label{thm:7}
Let $\omega_i=(U,\rho_i)$ be the inner Apollonius circle, externally tangent to $\gamma_a$,
$\gamma_b$, and $\gamma_c$. Let $U_a$ be the touch point between $\omega_i$ 
and $\gamma_a$. Define $U_b$, $U_c$ cyclically (Figure~\ref{fig:17}). The triangles $ABC$ and $U_aU_bU_c$ are perspective. The perspector is $X_7$, the Gergonne point of $ABC$.

\begin{figure}[ht]
\centering
\includegraphics[scale=0.9]{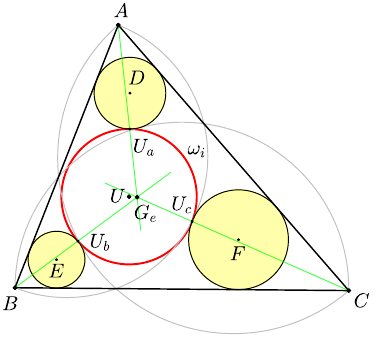}
\caption{}
\label{fig:17}
\end{figure}
\end{theorem}

\begin{proof}
The lines $AU_a$, $BU_b$, and $CU_c$ are concurrent by Theorem 2 of \cite{Rabinowitz}.
That theorem also shows that the point of concurrence is the  internal center of similitude
of the incircle of $\triangle ABC$ and the circle $\omega_i$.
The point of concurrence can be shown to be the Gergonne point, $G_e$, of $\triangle ABC$.
The proof is omitted.
\end{proof}
}

\begin{theorem}\label{thm:7}
Let $\omega_i$ be the inner Apollonius circle, externally tangent to $\gamma_a$,
$\gamma_b$, and $\gamma_c$. Let $U_a$ be the touch point between $\omega_i$
and $\gamma_a$. Define $U_b$ and $U_c$ cyclically.
Let $\omega_o$ be the outer Apollonius circle, internally tangent to $\gamma_a$,
$\gamma_b$, and $\gamma_c$. Let $V_a$ be the touch point between $\omega_o$
and $\gamma_a$. Define $V_b$ and $V_c$ cyclically. Let $G_e$ be the Gergonne point of $\triangle ABC$.
Then the points $A$, $V_a$, $U_a$, and $G_e$ are collinear.
Similarly, the points $B$, $V_b$, $U_b$, and $G_e$ are collinear;
and the points $C$, $V_c$, $U_c$, and $G_e$ are collinear (Figure~\ref{fig:17b}).
\end{theorem}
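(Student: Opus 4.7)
The approach is to prove all three collinearities by showing that, for each vertex, the four points in question all lie on the radical axis of the appropriate pair of circles in the triad $\{\gamma_a,\gamma_b,\gamma_c\}$. For the line through $A$, I will show that $A$, $V_a$, $U_a$, and $G_e$ all lie on the radical axis of $\gamma_b$ and $\gamma_c$; the other two cases follow by cyclic permutation. That $A$ and $G_e$ lie on this axis is exactly the content of the second (geometric) proof of Theorem~\ref{thm2a}, where the radical axis of $\gamma_b$ and $\gamma_c$ is identified with the Gergonne cevian from $A$.

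For $U_a$, I place the origin at $U$, the center of $\omega_i$. External tangency gives $|UD|=\rho_i+\rho_a$, $|UE|=\rho_i+\rho_b$, and $U_a=\rho_i\hat{D}$, where $\hat{D}$ is the unit vector from $U$ to $D$. A direct expansion yields
\begin{equation*}
\operatorname{pow}(U_a,\gamma_b)=|U_a-E|^2-\rho_b^2=2\rho_i(\rho_i+\rho_b)\bigl(1-\hat{D}\cdot\hat{E}\bigr).
\end{equation*}
The law of cosines in triangle $DUE$ combined with Theorem~\ref{thm:tangents} (the common external tangent of $\gamma_a$ and $\gamma_b$ has length $2r$) gives
\begin{equation*}
2(\rho_i+\rho_a)(\rho_i+\rho_b)\bigl(1-\hat{D}\cdot\hat{E}\bigr)=|DE|^2-(\rho_a-\rho_b)^2=(2r)^2.
\end{equation*}
Substituting, one obtains $\operatorname{pow}(U_a,\gamma_b)=4\rho_i r^2/(\rho_i+\rho_a)$, an expression that is manifestly symmetric in the labels $b,c$. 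Hence $\operatorname{pow}(U_a,\gamma_b)=\operatorname{pow}(U_a,\gamma_c)$, placing $U_a$ on the required radical axis.

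The argument for $V_a$ is parallel: I place the origin at $V$, use internal tangency so that $|VD|=\rho_o-\rho_a$ and $V_a=\rho_o\hat{D}$ lies on the extension of $VD$ beyond $D$, and repeat the same expansion. Invoking Theorem~\ref{thm:tangents} once more, one finds $\operatorname{pow}(V_a,\gamma_b)=4\rho_o r^2/(\rho_o-\rho_a)$, symmetric in $b,c$, so $V_a$ also lies on the radical axis of $\gamma_b$ and $\gamma_c$. Together with Step~1, this gives the desired collinearity; the other two cases are identical after cyclic relabeling. No step is technically subtle; the key observation is that Theorem~\ref{thm:tangents} is precisely the ingredient that forces $\operatorname{pow}(U_a,\gamma_j)$ and $\operatorname{pow}(V_a,\gamma_j)$ to depend on $j\in\{b,c\}$ only through quantities symmetric under $b\leftrightarrow c$.
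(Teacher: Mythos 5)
Your proposal is correct, and the computations check out: with the origin at $U$ one indeed gets $\operatorname{pow}(U_a,\gamma_b)=2\rho_i(\rho_i+\rho_b)\bigl(1-\hat{D}\cdot\hat{E}\bigr)$, the identity $|DE|^2-(\rho_a-\rho_b)^2=2(\rho_i+\rho_a)(\rho_i+\rho_b)\bigl(1-\hat{D}\cdot\hat{E}\bigr)$ is an exact algebraic consequence of the Law of Cosines, and Theorem~\ref{thm:tangents} turns both powers into $4\rho_i r^2/(\rho_i+\rho_a)$, independent of the choice of $\gamma_b$ versus $\gamma_c$; the analogous bookkeeping with $\rho_o-\rho_a$, $\rho_o-\rho_b$ works for $V_a$. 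However, your route is genuinely different from the paper's. The paper first gets the collinearity of $U_a$, $V_a$, $G_e$ by citing Theorem~\ref{thm2a} (Gergonne point as radical center) together with the Gergonne construction of Apollonius circles, and then disposes of the remaining collinearity of $A$, $U_a$, $G_e$ by a barycentric computation: it writes $U_a$ as the point dividing $DU$ in ratio $\rho_a:\rho_i$, computes its coordinates with \textsc{Mathematica}, and checks a $3\times3$ determinant vanishes. You instead prove that all four points lie on a single line, the radical axis of $\gamma_b$ and $\gamma_c$, which the geometric proof of Theorem~\ref{thm2a} already identifies with the Gergonne cevian from $A$, and you verify membership of $U_a$ and $V_a$ by direct power-of-a-point calculations. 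What your approach buys is a coordinate-free, computer-free argument that treats $U_a$ and $V_a$ uniformly and makes transparent exactly where the special structure enters (the equal tangent lengths $2r$ of Theorem~\ref{thm:tangents}, without which the two powers would be $\rho_i t_{ab}^2/(\rho_i+\rho_a)$ and $\rho_i t_{ac}^2/(\rho_i+\rho_a)$ and need not agree); what the paper's approach buys is brevity, since the $U_a$--$V_a$--$G_e$ collinearity comes for free from the classical Gergonne construction and only one further incidence needs to be checked, albeit by machine.
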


\begin{figure}[h]
\centering
\includegraphics[scale=0.75]{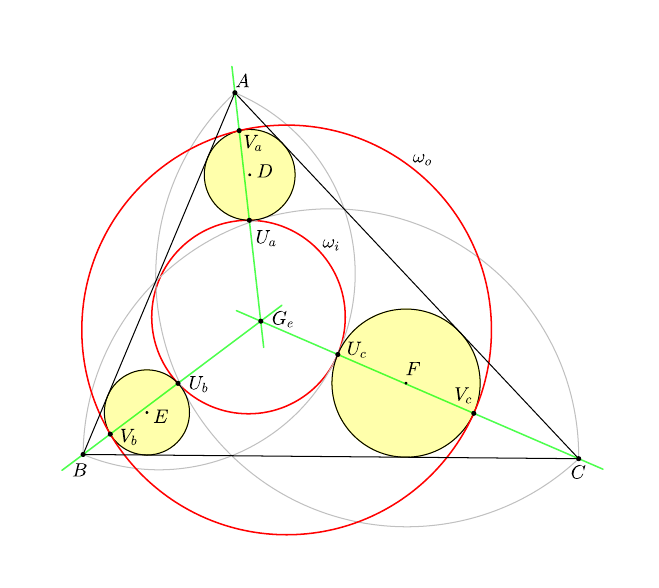}
\caption{}
\label{fig:17b}
\end{figure}

\begin{proof} Clearly, by symmetry, it is enough to prove that $A$, $V_a$, $U_a$, and $G_e$ are collinear.
From Theorem~\ref{thm2a}, we know that $G_e$ is the radical center of $\gamma_a$, $\gamma_b$,
and $\gamma_c$. Hence, from the Gergonne construction of Apollonius circles,
it follows that $U_a$, $V_a$, and $G_e$ are collinear.
Therefore, it remains to prove that $A$, $U_a$, and $G_e$ are collinear.
To this end we use barycentric coordinates.
We have $A=1:0:0$ and $G_e=\frac{1}{p-a}:\frac{1}{p-b}:\frac{1}{p-c}$.
The point $U_a$ divides the segment $DU$ joining the centers of the circles $\gamma_a$ and $\omega_i$ in the ratio $\rho_a:\rho_i$.
By using \textsc{Mathematica}, we find that
\begin{equation*}
   U_a=\frac{-a}{(p-b)(p-c)-\Delta}:\frac{1}{p-b}:\frac{1}{p-c}.
\end{equation*}
The points $A$, $U_a$, and $G_e$ are collinear because
\begin{equation*}
   \left| {\begin{array}{*{20}{c}}
1&0&0\\\\
\frac{-a}{(p-b)(p-c)-\Delta}&\frac{1}{p-b}&\frac{1}{p-c}\\\\
\frac{1}{p-a}&\frac{1}{p-b}&\frac{1}{p-c}
\end{array}} \right| = 0.
\end{equation*}
This completes the proof.
\end{proof}

\goodbreak
\textbf{Remark.}
We could also show that the lines $AU_a$, $BU_b$, and $CU_c$ are concurrent by using Theorem 2 of \cite{Rabinowitz}.
That theorem also shows that the point of concurrence, $G_e$, is the internal center of similitude
of the incircle of $\triangle ABC$ and the circle $\omega_i$.

\begin{corollary}
Let $U_a$ be the touch point between $\gamma_a$ and $\omega_i$
(Figure \ref{fig:equalTangents}).
Then the tangents from $U_a$ to $\gamma_b$ and $\gamma_c$ are equal.
\end{corollary}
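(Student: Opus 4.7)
The proposal is to combine Theorem~\ref{thm:7} with the radical-axis description of the Gergonne cevians that was established in the second (synthetic) proof of Theorem~\ref{thm2a}. Concretely, the key observation is that the line $AG_e$ plays two different roles in this paper: on the one hand it is the Gergonne cevian from $A$, which the synthetic proof of Theorem~\ref{thm2a} identifies with the radical axis of $\gamma_b$ and $\gamma_c$; on the other hand, by Theorem~\ref{thm:7}, it contains the tangency point $U_a$.

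First, I would recall from the synthetic proof of Theorem~\ref{thm2a} that the radical axis of $\gamma_b$ and $\gamma_c$ is exactly the cevian $AQ_a$, where $Q_a$ is the contact point of the incircle with $BC$; equivalently, it is the line through $A$ and the Gergonne point $G_e$. Next, I would invoke Theorem~\ref{thm:7}, which asserts that $A$, $V_a$, $U_a$, and $G_e$ are collinear. Thus $U_a$ lies on the line $AG_e$, and therefore on the radical axis of $\gamma_b$ and $\gamma_c$.

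The conclusion is then immediate from the definition of the radical axis: any point on the radical axis of two circles has equal tangent lengths to those two circles. Hence the tangents from $U_a$ to $\gamma_b$ and to $\gamma_c$ have equal length.

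There is really no hard step here; the entire content of the corollary has already been packaged into Theorems~\ref{thm2a} and~\ref{thm:7}. The only thing to be careful about in writing it up is to make sure the reader sees that ``$A$, $U_a$, $G_e$ collinear'' plus ``line $AG_e$ is the radical axis of $\gamma_b$ and $\gamma_c$'' together place $U_a$ on that radical axis; after that, the equal-tangents conclusion is the defining property of the radical axis and needs no computation.
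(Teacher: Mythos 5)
Your argument is correct and is essentially identical to the paper's own proof: both deduce from Theorem~\ref{thm:7} that $U_a$ lies on the Gergonne cevian $AG_e$, and from the synthetic proof of Theorem~\ref{thm2a} that this cevian is the radical axis of $\gamma_b$ and $\gamma_c$, whence the equal tangent lengths follow by definition of the radical axis. No changes needed.
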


\begin{figure}[ht]
\centering
\includegraphics[scale=0.5]{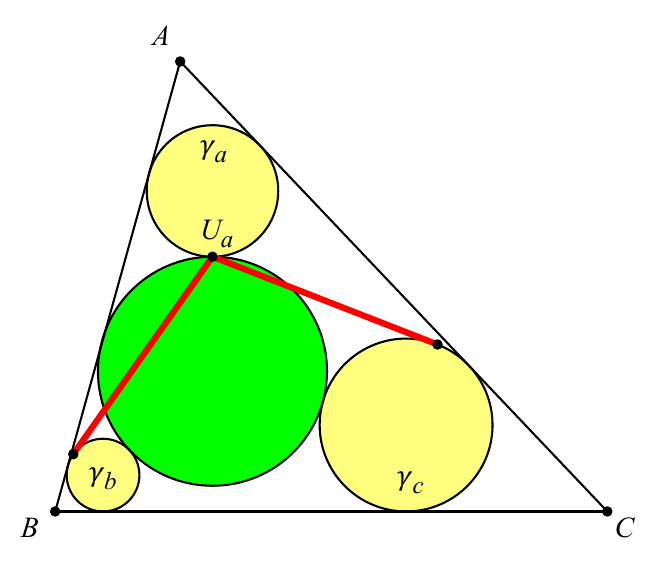}
\caption{red tangents are equal}
\label{fig:equalTangents}
\end{figure}

\begin{proof}
From Theorem~\ref{thm:7}, $AU_a$ is the Gergonne cevian from vertex $A$.
But from the proof of Theorem~\ref{thm2a}, this Gergonne cevian is the radical axis of
circles $\gamma_b$ and $\gamma_c$. Thus the two tangents have the same length.
\end{proof}

\void{
\begin{theorem}[to be deleted]
Let $\omega_i=(U,\rho_i)$ be the inner Apollonius circle, externally tangent to $\left(D\right)$, $\left(E\right)$, $\left(F\right)$. Let $U_a$ be the touch point between $\omega_i$ 
and $\gamma_a$. Define $U_b$, $U_c$ cyclically. The triangles $U_aU_bU_c$ and $DEF$ are orthologic.
The orthology center $(\triangle U_aU_bU_c,\triangle DEF)$ is the point $X_7$, the Gergonne point of $ABC$. 
\begin{figure}[ht]
\centering
\includegraphics[scale=1]{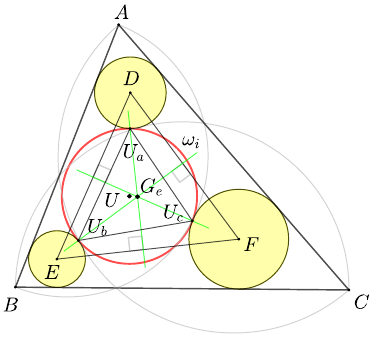}
\caption{}
\label{fig:8}
\end{figure}
\end{theorem}
}

\begin{theorem}
Let $\omega_i=(U,\rho_i)$, $\omega_o=(V,\rho_o)$ be the inner and outer Apollonius circles externally and internally tangent to $\gamma_a$,
$\gamma_b$, and $\gamma_c$, respectively. 
Let $U_a$, $V_a$ be the touch point of $\gamma_a$ with $\omega_i$ and $\omega_o$ respectively. Define $U_b$, $U_c$, $V_b$, $V_c$ cyclically. Let $I=X_1$, $G_e=X_7$ be the incenter and the Gergonne points of $\triangle ABC$ respectively (Figure~\ref{fig:19}). Then $U$ and $V$ lie on the Soddy line $IG_e$ and $UI:IV=3$.
\begin{figure}[ht]
\centering
\includegraphics[scale=1]{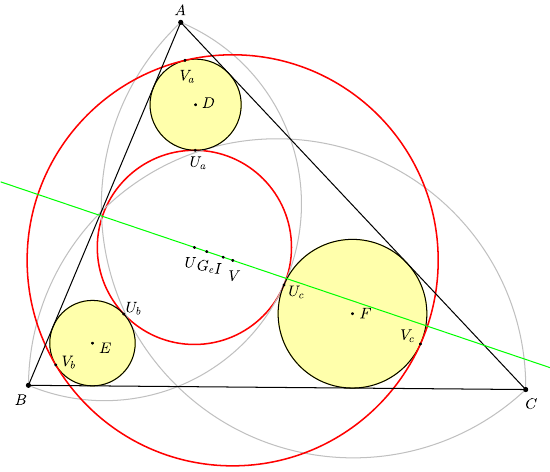}
\caption{}
\label{fig:19}
\end{figure}
\end{theorem}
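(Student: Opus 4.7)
The plan is to pin down $U$ and $V$ as centers of similitude with respect to the incircle, and then read off both the collinearity and the ratio from facts already established in the paper. The Remark following Theorem~\ref{thm:7} records that $G_e$ is the internal center of similitude of the incircle $(I,r)$ and the inner Apollonius circle $\omega_i=(U,\rho_i)$. By the standard property of the center of similitude, this alone forces $U$ onto line $IG_e$, with $G_e$ strictly between $I$ and $U$ and with signed displacement
\[
IU=\frac{r+\rho_i}{r}\,IG_e .
\]

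An entirely analogous application of Theorem~2 of \cite{Rabinowitz}, now to the outer Apollonius configuration in which $\omega_o$ is internally (rather than externally) tangent to $\gamma_a,\gamma_b,\gamma_c$, identifies $G_e$ as the \emph{external} center of similitude of $(I,r)$ and $\omega_o=(V,\rho_o)$. This places $V$ on the same line $IG_e$, on the opposite side of $I$ from $G_e$, with
\[
IV=\frac{\rho_o-r}{r}\,IG_e .
\]
Consequently $U$, $I$, $G_e$, $V$ all lie on a single line, namely the Soddy line.

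The ratio is then immediate from Corollary~\ref{cor2}, which asserts $3\rho_o=\rho_i+4r$, equivalently $\rho_o-r=(r+\rho_i)/3$. Combining with the two displacements above,
\[
\frac{UI}{IV}=\frac{r+\rho_i}{\rho_o-r}=3 .
\]

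The main obstacle I anticipate is cleanly justifying the second similitude assertion, since Theorem~2 of \cite{Rabinowitz} is formulated for an inner Apollonius construction and one has to verify that the same argument, applied to the outer one, produces the external rather than the internal center of similitude. If this turns out to be awkward, a purely computational fallback is available in the spirit of Theorem~\ref{thm:7}: use the coordinates of $U_a$ already derived there, invert the collinearity relation $U_a=\frac{\rho_i D+\rho_a U}{\rho_a+\rho_i}$ (with $D$ from Theorem~\ref{thm2}) to obtain barycentric coordinates for $U$, extract $V$ by the analogous formula from $V_a$ with the appropriate sign change reflecting internal tangency, and then verify the vanishing of the $3\times 3$ determinants with rows $I$, $G_e$, $U$ and $I$, $G_e$, $V$, together with the ratio $UI/IV=3$, by direct symbolic computation — exactly in the style used elsewhere in the paper.
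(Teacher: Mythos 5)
Your proposal is correct in substance but takes a genuinely different route from the paper, whose entire proof is a one-line appeal to the barycentric coordinates of $U$ and $V$ (the centers $X_{52805}$ and $X_{52806}$ mentioned in the preceding Remark), from which the collinearity with $X_1$, $X_7$ and the ratio are read off computationally. Your argument instead identifies $G_e$ as the internal similitude center of the incircle and $\omega_i$ and as the external similitude center of the incircle and $\omega_o$, and then the displacements $IU=\frac{r+\rho_i}{r}\,IG_e$ and $IV=\frac{\rho_o-r}{r}\,IG_e$ together with Corollary~\ref{cor2} ($3\rho_o=\rho_i+4r$, so $\rho_o-r=\frac{r+\rho_i}{3}$) give $UI:IV=3$ at once; this is more illuminating, since it exhibits the factor $3$ as a restatement of the radius relation and, as a bonus, shows the order of the points on the Soddy line is $U$, $G_e$, $I$, $V$. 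The one step you rightly flag, that $G_e$ is the \emph{external} similitude center of the incircle and $\omega_o$, is not stated in the paper (the Remark after Theorem~\ref{thm:7} covers only $\omega_i$), but it is true and can be closed with material already at hand rather than by re-examining the scope of Theorem~2 of \cite{Rabinowitz}: the homothety centered at $V_a$ taking $\omega_o$ to $\gamma_a$ has positive ratio (internal tangency), and the homothety centered at $A$ taking $\gamma_a$ to the incircle has positive ratio (both circles are inscribed in angle $A$); their composition is a positive homothety of $\omega_o$ onto the incircle whose center lies on line $AV_a$, which by Theorem~\ref{thm:7} is the Gergonne cevian through $G_e$. Repeating the argument at $B$ and $C$ places the external similitude center on all three Gergonne cevians, hence it equals $G_e$; the same d'Alembert-type composition with the negative homothety at $U_a$ re-proves the internal-similitude statement for $\omega_i$ independently of the citation. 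With that patch your proof is complete, and your computational fallback is essentially the paper's own method, so nothing further is needed.
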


\begin{proof}
This follows directly from the barycentric coordinates for $U$ and $V$.
\end{proof}

\section{Other properties}

\begin{theorem}
For the triad of circles associated with $\triangle ABC$,
let $x=\rho_a$, $y=\rho_b$, and $z=\rho_c$.
Let $u,v,w$ be the radii of the greatest circles inscribed in the circular segments shown in Figure~\ref{fig:10}.
Then
\begin{equation*}
   xw+yu+zv=xv+zu+yw.
\end{equation*}
\begin{figure}[ht]
\centering
\includegraphics[scale=0.8]{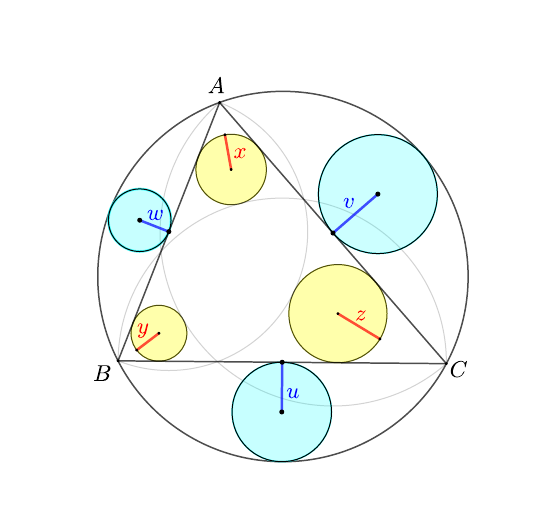}
\caption{}
\label{fig:10}
\end{figure}
\end{theorem}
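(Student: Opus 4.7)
The identity $xw+yu+zv=xv+zu+yw$ regroups as
\begin{equation*}
x(w-v)+y(u-w)+z(v-u)=0,
\end{equation*}
which is, up to sign, the expansion of
\begin{equation*}
\det\begin{pmatrix}x&y&z\\u&v&w\\1&1&1\end{pmatrix}=0.
\end{equation*}
Geometrically the claim therefore says that the three points $(\rho_a,u)$, $(\rho_b,v)$, $(\rho_c,w)$ are collinear in the Cartesian plane, or equivalently that an affine relation $u=\alpha\rho_a+\beta$, $v=\alpha\rho_b+\beta$, $w=\alpha\rho_c+\beta$ holds, with $\alpha$, $\beta$ depending only on $\triangle ABC$. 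I would prove the theorem by producing closed forms for $u,v,w$ and checking that the determinant vanishes (equivalently, exhibiting $\alpha,\beta$).

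Each of $u$, $v$, $w$ is the radius of the greatest circle inscribed in a circular segment of one of the semicircles $\omega_a$, $\omega_b$, $\omega_c$, whose radii are $a/2$, $b/2$, $c/2$. For a disk of radius $R$ cut by a chord at perpendicular distance $d$ from the centre, the greatest inscribed circle has its centre on the perpendicular from the centre to the chord, is tangent to the chord, and is internally tangent to the arc at its midpoint; hence it has radius $(R-d)/2$. Once the three chords drawn in Figure~\ref{fig:10} are identified, each of $u$, $v$, $w$ becomes an explicit expression in $a,b,c$ via this elementary formula.

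Combining these expressions with the formula $\rho_a=(\Delta-(p-b)(p-c))/p$ from Theorem~\ref{thm:rho1} and its cyclic analogues reduces the theorem to an algebraic identity in $a,b,c$. The identity can then be verified by direct simplification with \textsc{Mathematica}, consistent with the style used elsewhere in this paper; alternatively, one computes the ratio $\alpha=(v-u)/(\rho_b-\rho_a)$ directly and checks that it does not depend on the pair of indices chosen, which yields collinearity and hence the vanishing of the determinant.

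The main obstacle will be the accurate reading of Figure~\ref{fig:10}, since the asymmetric labelling in $xw+yu+zv=xv+zu+yw$ is sensitive to which segment (and hence which chord) is associated with which vertex: an incorrect matching would produce the identity with $u,v,w$ in a different permutation than the one stated. A secondary point of care is to ensure that the ``greatest'' inscribed circle is assigned to the segment actually depicted, since a chord produces two segments of unequal size. Once the chord--vertex correspondence is fixed from the figure, the remaining algebra is routine.
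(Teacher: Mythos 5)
There is a genuine gap, and it is exactly at the point you flagged as the ``main obstacle'': the identification of the segments. The circular segments in Figure~\ref{fig:10} are not segments of the semicircles $\omega_a$, $\omega_b$, $\omega_c$, as your plan assumes; they are the three segments cut off from the \emph{circumcircle} of $\triangle ABC$ by the sides. This is what produces the paper's closed forms (Figure~\ref{fig:21}): with $M$ the midpoint of $BC$ and $N$ the midpoint of the circumarc $BC$ not containing $A$, the inscribed angle theorem gives $\angle NBM=\frac{A}{2}$, so the height of the segment on $BC$ is $MN=\frac{a}{2}\tan\frac{A}{2}$ and, by the very half-sagitta lemma you state (largest inscribed circle has radius half the segment height), $u=\frac12 MN=\frac{a}{4}\tan\frac{A}{2}=R\sin^2\frac{A}{2}$, and cyclically $v=\frac{b}{4}\tan\frac{B}{2}$, $w=\frac{c}{4}\tan\frac{C}{2}$. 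Starting from segments of $\omega_a$, $\omega_b$, $\omega_c$ would feed wrong data into your ``routine algebra'' (for instance, the piece of $\omega_a$ overhanging side $AB$ has largest inscribed radius $\frac{a}{4}(1-\sin B)$, not anything of the form above), so the reduction you describe would not verify the stated identity with the stated matching.

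Beyond the misreading, the proposal defers all of the actual content: no chord is identified, no formula for $u,v,w$ is derived, and the final identity is left to \textsc{Mathematica} or to an unverified claim that $(v-u)/(\rho_b-\rho_a)$ is independent of the pair of indices. The one step you do carry out, rewriting the claim as $x(w-v)+y(u-w)+z(v-u)=0$ (a vanishing determinant), is the same elementary regrouping the paper uses when it writes the difference as $u(y-z)+v(z-x)+w(x-y)$ and sums the cyclic terms. That said, your structural guess is correct and, once the right formulas are in hand, gives an argument slightly cleaner than the paper's: since $(p-a)\tan\frac{A}{2}=r$ we have $a\tan\frac{A}{2}=p\tan\frac{A}{2}-r$, and Theorem~\ref{thm:rho} gives $\tan\frac{A}{2}=1-\frac{\rho_a}{r}$, whence
\begin{equation*}
u=\frac{p-r}{4}-\frac{p}{4r}\,\rho_a,\qquad v=\frac{p-r}{4}-\frac{p}{4r}\,\rho_b,\qquad w=\frac{p-r}{4}-\frac{p}{4r}\,\rho_c,
\end{equation*}
a single affine relation with the same coefficients for all three, which makes your determinant vanish and yields $xw+yu+zv=xv+zu+yw$ immediately. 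As written, however, the proposal neither identifies the correct segments nor establishes any formula for $u$, $v$, $w$, so it does not yet constitute a proof.
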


\begin{proof}
From Theorem~\ref{thm:rho}, we have
\begin{equation*}
   x=r\left(1-\tan \frac{A}{2}\right),\quad y=r\left(1-\tan \frac{B}{2}\right),\quad z=r\left(1-\tan \frac{C}{2}\right).
\end{equation*}

\begin{figure}[ht]
\centering
\includegraphics[scale=0.8]{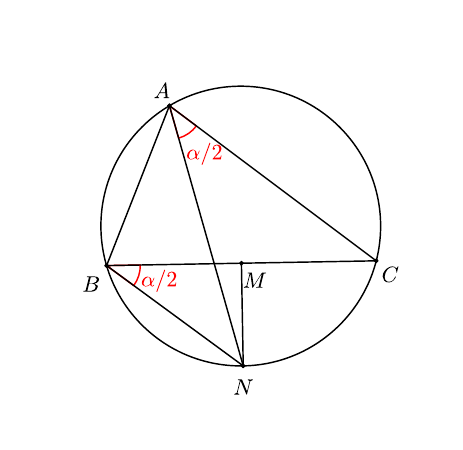}
\caption{}
\label{fig:21}
\end{figure}

On the other hand, we have (see Figure~\ref{fig:21})
\begin{equation*}
   u=\frac{1}{2}MN=\frac{1}{2}BM\cdot \tan \frac{A}{2}=\frac{a}{4}\tan \frac{A}{2}
\end{equation*}
and similarly
\begin{equation*}
   v=\frac{b}{4}\tan \frac{B}{2},\quad w=\frac{c}{4}\tan \frac{C}{2}.
\end{equation*}
Observe that 
\begin{equation*}
   \tan \frac{A}{2}=\frac{r}{s-a}=\frac{\Delta}{s(s-a)},\quad \tan \frac{B}{2}=\frac{\Delta}{s(s-b)}, \quad \tan \frac{C}{2}=\frac{\Delta}{s(s-c)}
\end{equation*}
so, using the Heron formula $\Delta^2=s(s-a)(s-b)(s-c)$, we get
\begin{align*}
   u(y-z)&=\frac{a}{4}\tan \frac{A}{2}\left(r\left(1-\tan \frac{B}{2}\right)-r\left(1-\tan \frac{C}{2}\right)\right)\\
         &=\frac{ar}{4}\tan \frac{A}{2}\left(\tan \frac{C}{2}-\tan \frac{B}{2}\right)\\
				 &=\frac{ar}{4}\left(\tan \frac{A}{2}\tan \frac{C}{2}-\tan \frac{A}{2}\tan \frac{B}{2}\right)\\
				 &=\frac{ar}{4}\left(\frac{\Delta}{s(s-a)}\frac{\Delta}{s(s-c)}-\frac{\Delta}{s(s-a)}\frac{\Delta}{s(s-b)}\right)\\
				 &=\frac{ar}{4}\left(\frac{s-b}{s}-\frac{s-c}{s}\right)\\
				 &=\frac{r}{4}\cdot\frac{ac-ab}{s}.
\end{align*}
Similarly, we have 
\begin{equation*}
  v(z-x)=\frac{r}{4}\cdot\frac{ba-bc}{s}, \qquad w(x-y)=\frac{r}{4}\cdot\frac{cb-ca}{s}. 
\end{equation*}
Therefore
\begin{align*}
    xw+yu+zv-(v+zu+yw)=&u(y-z)+v(z-x)+w(x-y)\\
		                  =&\frac{r}{4}\cdot\frac{ac-ab}{s}+\frac{r}{4}\cdot\frac{ba-bc}{s}+\frac{r}{4}\cdot\frac{cb-ca}{s}\\
											=&\frac{r}{4}\cdot\frac{ac-ab+ba-bc+cb-ca}{s}=0.
\end{align*}
This completes the proof.
\end{proof}


\goodbreak

\textbf{Errata.} On page 66, in equation (\ref{eq:15}), ``$t_2$'' should be ``$t_b$'' and in equation (\ref{eq:16}), ``$\cos\varphi_1$'' should be ``$\cos\varphi_a$''.

\end{document}